\newtheorem{theorem}{Theorem}
\newtheorem{proposition}[theorem]{Proposition}
\newtheorem{lemma}[theorem]{Lemma}
\newtheorem{corollary}[theorem]{Corollary}
\theoremstyle{definition}
\newtheorem{definition}[theorem]{Definition}
\theoremstyle{remark}
\newtheorem*{remark}{Remark}
\newcommand{\cG}{\mathcal{G}}
\newcommand{\cN}{\mathcal{N}}
\newcommand{\cW}{\mathcal{W}}
\newcommand{\bC}{\mathbb{C}}
\newcommand{\bE}{\mathbb{E}}\newcommand{\bF}{\mathbb{F}}
\newcommand{\bN}{\mathbb{N}}
\newcommand{\bR}{\mathbb{R}}
\newcommand{\bZ}{\mathbb{Z}}
\newcommand{\1}{\mathds{1}}
\newcommand{\val}{\operatorname{val}}
\newcommand{\RW}{\operatorname{RW}}
\newcommand{\Bin}{\operatorname{Bin}}
\newcommand{\poly}{\operatorname{poly}}
\newcommand{\disTV}{d_{\mathrm{TV}}}
\newcommand{\disltwo}{d_{\ell_2}}
\newcommand{\Var}{\operatorname{Var}\limits}
\newcommand{\bigast}{\mathop{\scalebox{1.5}{\raisebox{-0.2ex}{$\ast$}}}}%
\newcommand{\nc}{\newcommand}
\nc{\on}{\operatorname}
\nc{\Spec}{\on{Spec}}
\nc{\Aut}{\textit{Aut}}
\nc{\id}{\textit{id}}
\nc{\chr}{\on{char}}
\nc{\im}{\on{im}}
\nc{\Hom}{\on{Hom}}
\nc{\lcm}{\on{lcm}}
\nc{\dual}[1]{\prescript{t}{}{#1}}
\nc{\transpose}[1]{{#1}^{\intercal}}
\nc{\Sym}{\on{Sym}}
\nc{\End}{\on{End}}
\nc{\stab}{\on{stab}}
\nc{\Li}{\on{Li}}
\nc{\spn}{\on{span}}
\nc{\sgn}{\on{sgn}}
\nc{\supp}{\on{supp}}
\nc{\Unif}{\on{Unif}}
\title{A New Berry-Esseen Theorem for Expander Walks\footnote{To appear in the 55th Annual ACM Symposium on Theory of Computing (STOC 2023). These results first appeared in the author's undergraduate thesis \cite{golowich_random_2022}.}}
\author{
  Louis Golowich\thanks{Currently supported by an NSF Graduate Fellowship. This work was done while the author was at Harvard University.} \\
  UC Berkeley \\
  Berkeley, CA USA \\
  \href{mailto:lgolowich@berkeley.edu}{\texttt{lgolowich@berkeley.edu}}
}
\begin{document}

\pagenumbering{gobble}

\maketitle

\begin{abstract}
  We prove that the sum of $t$ boolean-valued random variables sampled by a random walk on a regular expander converges in total variation distance to a discrete normal distribution at a rate of $O(\lambda/t^{1/2-o(1)})$, where $\lambda$ is the second largest eigenvalue of the random walk matrix in absolute value. To the best of our knowledge, among known Berry-Esseen bounds for Markov chains, our result is the first to show convergence in total variation distance, and is also the first to incorporate a linear dependence on expansion $\lambda$. In contrast, prior Markov chain Berry-Esseen bounds showed a convergence rate of $O(1/\sqrt{t})$ in weaker metrics such as Kolmogorov distance.
  
  Our result also improves upon prior work in the pseudorandomness literature, which showed that the total variation distance is $O(\lambda)$ when the approximating distribution is taken to be a binomial distribution. We achieve the faster $O(\lambda/t^{1/2-o(1)})$ convergence rate by generalizing the binomial distribution to discrete normals of arbitrary variance. We specifically construct discrete normals using a random walk on an appropriate 2-state Markov chain. Our bound can therefore be viewed as a regularity lemma that reduces the study of arbitrary expanders to a small class of particularly simple expanders.
\end{abstract}

\newpage

% \tableofcontents

% \newpage

\pagenumbering{arabic}

\section{Introduction}

The Berry-Esseen Theorem \cite{berry_accuracy_1941,esseen_liapunov_1942} is the quantitative version of the central limit theorem, and states that the sum of $t$ sufficiently independent random variables converges to a normal distribution at a rate of $O(1/\sqrt{t})$. In this paper, we prove a new Berry-Esseen theorem for random walks on expander graphs. Specifically, we show that the sum of $t$ boolean-valued random variables sampled using a random walk on a regular expander graph converges in total variation distance to an appropriate discrete normal distribution at a rate of $O(\lambda/t^{1/2-o(1)})$, where $\lambda$ is the second largest eigenvalue of the random walk matrix in absolute value. This bound recovers the $O(1/\sqrt{t})$ convergence rate in the classical Berry-Esseen theorem (up to a $t^{o(1)}$ error), and simultaneously incorporates
% TODO: figure out if linear dependence on $\lambda$ is optimal; seems it likeley is at least technically because for $t=O(1)$ true variance can be $\Omega(\lambda/t)=\Omega(\lambda)$ far from asymptotic variance (but would need to formalize, and show such variance difference implies total variation separation)
a linear dependence on $\lambda$.
% the optimal linear dependence on $\lambda$ (see \cite{cohen_expander_2022,golowich_pseudorandomness_2022-1}).
To the best of our knowledge, prior known Berry-Esseen theorems for Markov chains with spectral gap $1-\lambda$ did not achieve the linear dependence on $\lambda$ in our bound. Furthermore, our bound applies to total variation distance, whereas prior Markov chain Berry-Esseen bounds only considered weaker metrics such as Kolmogorov distance.

\subsection{Main result}
\label{sec:mainresultinf}
This section describes our main result, and interprets it both as a Berry-Esseen theorem and as a regularity lemma for expander walks. The formal result statement is given in Section~\ref{sec:resultstatement}.

In words, our main result says that if $G$ is a $\lambda$-spectral expander graph with vertex labeling $\val:V(G)\rightarrow\{0,1\}$, then the sum $\sum_{i\in[t]}\val(\RW_G^t)_i$ of the labels from a length-$t$ random walk on $G$ converges in total variation distance to an appropriate discrete normal distribution $\cN_{\sigma^2}^t$ at a rate of almost $O(\lambda/\sqrt{t})$.

\begin{theorem}[Main result; informal statement of Theorem~\ref{thm:berryesseen}]
  \label{thm:mainresultinf}
  Let $G$ be a regular $\lambda$-spectral expander graph with vertex labeling $\val:V(G)\rightarrow\{0,1\}$, where $\lambda$ is less than some sufficiently small constant. Then for all $t\in\bN$,
  \begin{equation}
    \label{eq:mainresultinf}
    \disTV\left(\sum_{i\in[t]}\val(\RW_G^t)_i,\; \cN_{\sigma^2}^t\right) \leq \frac{\lambda}{t^{1/2-o_p(1)}}
  \end{equation}
  for an appropriate discrete normal distribution $\cN_{\sigma^2}^t$, where the $o_p(1)$ term above approaches $0$ as $t\rightarrow\infty$ for every fixed value of the weights $p_b=|\val^{-1}(b)|/n$ of the labeling $\val:V(G)\rightarrow\{0,1\}$.
\end{theorem}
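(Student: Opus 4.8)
The plan is to prove the Berry--Esseen bound by an iterative/hybrid argument that swaps the true expander walk for a walk on a carefully chosen two-state chain, one coordinate at a time, controlling the total variation cost of each swap. First I would set up the right target: given the label weights $p_0,p_1$, I define the discrete normal $\cN_{\sigma^2}^t$ as the distribution of $\sum_{i\in[t]}\val'(\RW_{G'}^t)_i$ where $G'$ is an explicit $2$-state Markov chain (a "sticky" chain $G_\lambda$ with a tunable holding probability) whose stationary measure matches $(p_0,p_1)$ and whose second eigenvalue is tuned so that the variance $\sigma^2$ of the partial sum agrees, to leading order, with that of the true walk. The point of using this family rather than a plain binomial is that it gives a one-parameter family of admissible variances, so one can always match the second moment exactly; matching the variance is what upgrades the crude $O(\lambda)$ bound from the pseudorandomness literature to the $o(1)$-in-$t$ rate.

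The core of the argument is a local central limit theorem in total variation for \emph{both} the expander walk and the two-state chain, proven via Fourier analysis on $\bZ$ (equivalently, on the circle). Concretely, I would write the characteristic function of $S_t=\sum_{i\in[t]}\val(\RW_G^t)_i$ as $\bE[e^{i\theta S_t}] = \transpose{\1} D_\theta W D_\theta W \cdots D_\theta W \pi$ where $W$ is the random-walk matrix and $D_\theta=\mathrm{diag}(e^{i\theta\,\val(v)})$, then diagonalize the perturbed operator $D_\theta W$ for small $\theta$: its top eigenvalue is $\mu(\theta) = e^{i\mu_1\theta - \tfrac12\sigma^2\theta^2 + O(\theta^3)}$ with a spectral gap inherited (up to $O(\lambda+|\theta|)$) from $W$, so $\bE[e^{i\theta S_t}]$ is $\mu(\theta)^t$ up to an error decaying like $(\lambda + |\theta|)^t$ plus lower-order terms. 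Doing the same for the two-state chain gives a characteristic function of exactly the same Gaussian shape but with a genuinely different top eigenvalue $\mu'(\theta)$; the key estimate is $|\mu(\theta) - \mu'(\theta)| = O(\lambda |\theta|^3 + |\theta|^4)$ once the first two Taylor coefficients are matched, where the $\lambda$ factor comes from the fact that the cubic term in $\log\mu(\theta)$ is itself $O(\lambda)$ for a $\lambda$-expander (the third cumulant would vanish entirely for independent bits). Integrating $|\bE[e^{i\theta S_t}] - \bE[e^{i\theta S_t'}]|$ against $d\theta$ over $|\theta|\lesssim 1$ and invoking the $\ell_2$/TV bound $\disTV \le \tfrac12\|\widehat{f}-\widehat{g}\|$-type inequality (after a smoothing/truncation of the tails) yields $\disTV \lesssim \lambda\, t^{-1} \cdot t^{1/2} = \lambda t^{-1/2}$ from the bulk $|\theta|\lesssim t^{-1/2}$ region, with the $t^{o(1)}$ slack absorbing the contribution of the annulus $t^{-1/2}\lesssim|\theta|\lesssim 1$ where the spectral-gap decay $(\lambda+|\theta|)^t$ is not yet negligible and must be handled with a more careful, slightly lossy estimate.

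The main obstacle, and where I expect most of the work to go, is controlling the intermediate-frequency range $t^{-1/2}\lesssim |\theta|\lesssim \text{const}$: here the quadratic approximation to $\log\mu(\theta)$ is no longer accurate, yet $\theta$ is not small enough for the $(\lambda+|\theta|)^t$ spectral-gap bound to kill the term outright, so one needs a uniform sub-Gaussian-type decay bound $|\mu(\theta)| \le e^{-c\theta^2}$ valid on the whole window, established by a direct operator-norm argument on $D_\theta W$ rather than eigenvalue perturbation. Getting the $\lambda$-dependence to survive this regime, rather than degrading to $O(1)$, is exactly the technical novelty; it is what forces the $o_p(1)$ loss (the implied window and constants depend on how close $p_0,p_1$ are to $0$ or $1$, i.e.\ on how degenerate the per-step variance is) and it is why the theorem is stated with $t^{1/2-o_p(1)}$ rather than a clean $t^{1/2}$. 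A secondary but real obstacle is the passage from an $\ell_1$/characteristic-function bound on $\bZ$ to genuine total variation distance between the integer-valued random variables: since both $S_t$ and its Gaussian surrogate live on a lattice with the same span (no periodicity obstruction, as $\val$ takes both values), this is handled by a standard Poisson-summation/aliasing argument, but it requires checking that the discrete normal $\cN_{\sigma^2}^t$ is itself defined as an integer-supported distribution (which the two-state-chain construction guarantees) so that the comparison is apples-to-apples.
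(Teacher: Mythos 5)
Your proposal is a genuine alternative to the paper's proof, and it is worth comparing the two carefully. You follow the classical Nagaev--Guivarc'h route: twist the transfer operator by $D_\theta$, isolate the dominant eigenvalue $\mu(\theta)$, Taylor-expand $\log\mu$, and compare cumulants against those of the two-state sticky chain. The paper instead runs an \emph{induction on $t$}: it splits the length-$t$ walk into $\approx\sqrt{t}$ blocks of length $\approx\sqrt{t}$ (at $O(\lambda/\sqrt{t})$ TV cost, by the Golowich--Vadhan ``one-step-swap'' lemma, Theorem~\ref{thm:difftail}), and then bounds the third derivative of $\hat{h}_G^{\sqrt{t}}-\hat{n}_G^{\sqrt{t}}$ \emph{not by a cumulant formula} but by $\big(8\sqrt{u\log u}\big)^3\|h_G^u-n_G^u\|_1$ plus a tail term, invoking the inductive hypothesis on $\|h_G^u-n_G^u\|_1$. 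So the paper never needs to analyze the third cumulant of the twisted operator at all; the inductive TV bound at the smaller scale substitutes for it. This is the paper's main structural innovation and it is entirely absent from your sketch: you replace it with a direct perturbation-theoretic estimate on $\mu-\mu'$, which is exactly the step you would need to prove from scratch.

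On that step your proposal has two concrete gaps. First, the parenthetical justification --- ``the third cumulant would vanish entirely for independent bits'' --- is false whenever $p_0\neq p_1$ (the i.i.d.\ third cumulant is $t\,p_0p_1(p_0-p_1)$). What you actually need is that the \emph{difference} of per-step third cumulants between $G$ and the sticky chain is $O(\lambda)$; this is plausible (analogous to Corollary~\ref{cor:expasympvar} for variance) but it is not established, and establishing it uniformly for the Taylor remainder (i.e.\ bounding $\dv{^4}{\theta^4}(\log\mu-\log\mu')$ on a whole $\theta$-interval, not just at $\theta=0$) is nontrivial. Second, your stated bound $|\mu(\theta)-\mu'(\theta)|=O(\lambda|\theta|^3+|\theta|^4)$ has a fourth-order term without a $\lambda$ factor. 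Pushed through the Plancherel/Cauchy--Schwarz step this contributes an error of order $1/t$ with no $\lambda$, which dominates $\lambda/\sqrt{t}$ in the regime $\lambda\ll 1/\sqrt{t}$ and would not give the theorem. You would need the remainder term to also carry a $\lambda$ factor, which again requires the uniform fourth-derivative control above. Finally, the source of the $t^{o_p(1)}$ loss is different in the two arguments: the paper's $(1+\log t)^{O(\log\log t)}$ factor arises from a polylog loss at each of $O(\log\log t)$ induction levels, not from the intermediate-frequency annulus you describe; the paper handles $|\theta|\gtrsim t^{-1/2}$ cleanly via the smoothness Lemma~\ref{lem:smooth}, which gives the uniform sub-Gaussian decay $|\hat h_G^t(\theta)|\le e^{-p_0p_1 t\theta^2/20}$ on all of $[-\pi,\pi]$ with no extra loss. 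If your operator-norm approach to the intermediate frequencies can be made to avoid the induction entirely, it could plausibly remove the paper's $t^{o(1)}$ factor --- a question the paper explicitly leaves open --- but as written the crucial cumulant-difference estimates are asserted rather than proven.
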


The parameter $\sigma^2$ above denotes the {\it asymptotic variance} so that
\begin{equation*}
  \sigma^2 = \lim_{t\rightarrow\infty}\frac{\Var(\Sigma\val(\RW_G^t))}{t} = \lim_{t\rightarrow\infty}\frac{\Var(\cN_{\sigma^2}^t)}{t},
\end{equation*}
where we denote $\Sigma\val(\RW_G^t)=\sum_{i\in[t]}\val(\RW_G^t)_i$.

For intuition, observe that when $\lambda=0$, then $G$ is a complete graph with self loops so that $\Sigma\val(\RW_G^t)=\Bin(t,p_1)$, and the right hand side of~(\ref{eq:mainresultinf}) vanishes. Thus we must have $\cN_{\sigma^2}^t=\Bin(t,p_1)$ with $\sigma^2=p_0p_1$. Therefore our discrete normal $\cN_{\sigma^2}^t$ provides a generalization of the binomial distribution to arbitrary variances for any given $t,p$.

We prove Theorem~\ref{thm:mainresultinf} for a general class of discrete normals satisfying a set of axioms (see Section~\ref{sec:discnormdef}). We then present a concrete instantiation of such a discrete normal family satisfying these axioms, given by letting $\cN_{\sigma^2}^t$ be the distribution $\Sigma\val(\RW_{G_{\nu,p}}^t)$ for an appropriate $\nu=\nu(p,\sigma^2)$, where $G_{\nu,p}$ denotes the \textit{$\nu$-sticky, $p$-biased random walk}. This walk is the 2-state\footnote{To express $G_{\nu,p}$ as a regular graph, we let the graph have arbitrarily large vertex set $V=V_0\sqcup V_1$, but specify that all $|V_b|=p_b|V|$ vertices within each $V_b$ are interchangeable (see Section~\ref{sec:sticky}).} Markov chain with stationary distribution $p$ that moves from state $b\in\{0,1\}$ to state $b'\in\{0,1\}$ with probability $(1-\nu)\cdot p_{b'}+\nu\cdot\1_{b=b'}$.

Our Berry-Esseen result can also be viewed as a regularity lemma for expander walks. The $\nu$-sticky walk has expansion $\lambda(G_{\nu,p})=\nu$, and therefore provides a canonical ``simplest'' example of a $\nu$-spectral expander. Indeed, Guruswami and Kumar~\cite{guruswami_pseudobinomiality_2021} introduced $G_{\nu,p}$ for the case $p_0=p_1=1/2$ as a model for studying expander walks, but with no formal way to derive results about general expanders from $G_{\nu,p}$. Our bound in Theorem~\ref{thm:mainresultinf} with $\cN_{\sigma^2}^t=\Sigma\val(\RW_{G_{\nu(p,\sigma^2)},p}^t)$ shows that the distribution of the sum of the labels from a $t$-step random walk on an arbitrary $\lambda$-spectral expander graph $G$ is approximated by the analogous distribution for $G_{\nu,p}$, up to a total variation error that vanishes as $\lambda\rightarrow 0$ or as $t\rightarrow\infty$. That is, our bound reduces the study of the random walk on an arbitrary expander $G$ to the study of a much simpler sticky walk $G_{\nu,p}$.

Though we do not present any direct applications of our Berry-Esseen bound, similar prior results have found various applications in computer science. Prior limit theorems that approximate sums of random variables in total variation distance have yielded applications to learning theory and algorithmic game theory \cite{daskalakis_efficient_2008,daskalakis_oblivious_2009,daskalakis_learning_2012,daskalakis_learning_2013}. At a high level, these applications make use of the regularity lemma interpretation described above. Specifically, they use appropriate limit theorems to obtain concise descriptions of target distributions. The Berry-Esseen theorem also has applications to pseudorandomness, e.g.~\cite{meka_pseudorandom_2010,gopalan_pseudorandom_2011}. Further connections of our work to pseudorandomness are discussed in Section~\ref{sec:priorpseudo} below.

The following two sections place our results in context within the literature. While our main result is a Berry-Esseen bound (or viewed alternatively, a regularity lemma), our proof techniques evolved from a line of work on the pseudorandomness of expander walks \cite{guruswami_pseudobinomiality_2021,cohen_expander_2021,cohen_expander_2022,golowich_pseudorandomness_2022-1}, and our work also has implications for this area. We therefore first compare our result to prior Berry-Esseen bounds, and then describe the pseudorandomness implications.

\subsection{Comparison to prior Berry-Esseen bounds}
Recall that the classic Berry-Esseen Theorem \cite{berry_accuracy_1941,esseen_liapunov_1942} states that the sum of $t$ independent random variables with bounded 2nd and 3rd moments converges in Kolmogorov distance to a normal distribution, with the error decaying at a rate of $O(1/\sqrt{t})$. Markov chain Berry-Esseen theorems show that the same rate of convergence holds when the variables may be correlated according to a Markov chain. Various versions of these results have been shown under different convergence metrics and conditions on the variables and the Markov chain.

% In particular, there are known Markov chain Berry-Esseen theorems that show convergence in total variation distance to a normal distribution at a rate of $O(1/\sqrt{t})$ (e.g.~\cite{jensen_asymptotic_1989,herve_nagaev-guivarch_2010}). However, these results naturally only apply in the continuous case, as the sum of discrete random variables cannot converge in total variation distance to a continuous normal distribution.

In particular, there are known Markov chain Berry-Esseen theorems that show convergence in Kolmogorov distance to a normal distribution at a rate of $O(1/\sqrt{t})$ (e.g.~\cite{bolthausen_berry-esseen_1980,bolthausen_berry-esseen_1982,herve_nagaev-guivarch_2010,kloeckner_effective_2019}).
% \cite{herve_nagaev-guivarch_2010} also shows Prokhorov distance bound for multi-dimensional variables, but likely weaker than a Kolmogorov distance bound for univariate distributions, which is what we care about?
Some of these results (e.g.~\cite{bolthausen_berry-esseen_1980,kloeckner_effective_2019}) do apply to Markov chains with a discrete state space, as in our setting. Yet convergence in Kolmogorov distance is weaker than convergence in total variation distance. Indeed, no sequence of discrete random variables can converge to a (continuous) normal distribution in total variation distance.

Berry-Esseen bounds in total variation distance for discrete random variables have been shown when the variables are not correlated according to a Markov chain. In this case, the approximating distribution is taken to be some discrete analogue of the normal distribution, such as a compound Poisson \cite{barbour_poisson_1999}, translated Poisson \cite{rollin_translated_2007}, binomial \cite{rollin_symmetric_2006}, and a histogram discretization of the continuous normal \cite{fang_discretized_2014}. While these works primarily consider the iid case, some of them, such as R\"{o}llin \cite{rollin_translated_2007,rollin_symmetric_2006} as well as Barbour and Xia \cite{barbour_poisson_1999}, also provide total variation Berry-Esseen bounds for integer-valued random variables that may have some dependencies among the variables. However, the permitted dependencies among the variables in such prior work are not general enough to apply in our setting of random walks on expanders.

In contrast, our expander walk Berry-Esseen bound applies when the variables $\val(\RW_G^t)_i$ are correlated according to a random walk on a $\lambda$-spectral expander $G$, which is a Markov chain with spectral gap $1-\lambda$. Our total variation bound of $\lambda/t^{1/2-o(1)}$ for the rate of convergence unifies the ordinary $O(1/\sqrt{t})$ Berry-Esseen convergence rate (up to a $t^{o(1)}$ loss) with the linear dependence on spectral expansion $\lambda$ from \cite{cohen_expander_2022,golowich_pseudorandomness_2022-1} (see below). As described above, to the best of our knowledge, no such bound of almost $\lambda/\sqrt{t}$ was previously known, even for the weaker Kolmogorov distance metric.

\subsection{Implications for the pseudorandomness of expander walks}
\label{sec:priorpseudo}
Our Berry-Essen bound contributes to a recent line of work \cite{guruswami_pseudobinomiality_2021,cohen_expander_2021,cohen_expander_2022,golowich_pseudorandomness_2022-1} studying the extent to which random walks on expander graphs fool symmetric functions. This problem was motivated as a generalization of the observation that expander walks fool the parity function, which plays a central role in Ta-Shma's breakthrough construction of almost-optimal $\epsilon$-balanced codes \cite{ta-shma_explicit_2017}.

Let $G$ be a $\lambda$-spectral expander with vertex labeling $\val:V(G)\rightarrow\{0,1\}$ that assigns label $b\in\{0,1\}$ to $p_b$-fraction of the vertices. The length-$t$ random walk on $G$ is said to \textit{$\epsilon$-fool} a given function $f$ on $\{0,1\}^t$ if
\begin{equation*}
  \disTV(f(\val(\RW_G^t)),f(\val(\RW_J^t)))\leq\epsilon,
\end{equation*}
where $J$ denotes the complete graph with self loops on vertex set $V(G)$ (and we extend $\val$ to act on sequences component-wise).

The recent work \cite{guruswami_pseudobinomiality_2021,cohen_expander_2021,cohen_expander_2022,golowich_pseudorandomness_2022-1} studied the problem of bounding the extent to which expander walks fool symmetric functions $f$. As a general symmetric function on $\{0,1\}^t$ only depends on the sum of the $t$ input bits, it is sufficient to consider $f(a)=\sum_{i\in[t]}a_i$, and then to bound $\disTV(\Sigma\val(\RW_G^t),\Sigma\val(\RW_J^t))$. Here by definition $\Sigma\val(\RW_J^t)=\Bin(t,p_1)$ is simply the binomial distribution.

Cohen et al.~\cite{cohen_expander_2022} and Golowich and Vadhan~\cite{golowich_pseudorandomness_2022-1} showed that for every $\lambda$-spectral expander $G$, the random walk $O(\lambda)$-fools all symmetric functions, that is,
\begin{equation}
  \label{eq:foolsym}
  \disTV\left(\Sigma\val(\RW_G^t),\Bin(t,p_1)\right) \leq O(\lambda).
\end{equation}
They also showed that this bound is tight, in the sense that there exist $\lambda$-spectral expanders $G$ for which the left hand side above is $\Omega(\lambda)$. Specifically, Golowich and Vadhan~\cite{golowich_pseudorandomness_2022-1} show that~(\ref{eq:foolsym}) is tight for the sticky walk $G=G_{\lambda,p}$ because as $t\rightarrow\infty$, the central limit theorem implies that $\Sigma\val(\RW_{G_{\lambda,p}})$ and $\Bin(t,p_1)$ converge in Kolmogorov distance to normal distributions whose variances have ratio $1+\Theta(\lambda)$. The main idea of our Berry-Esseen result is to leverage this insight to improve the bound in~(\ref{eq:foolsym}) to $O(\lambda/t^{1/2-o_p(1)})$, by replacing the binomial approximating distribution with a discrete normal of appropriate variance.

By improving upon the $O(\lambda)$ bound in~(\ref{eq:foolsym}), our Berry-Esseen bound in Theorem~\ref{thm:mainresultinf} also helps characterize which symmetric functions $f$ are $\epsilon$-fooled by expander walks for $\epsilon\ll O(\lambda)$. Specifically, our result implies that for all $\epsilon\geq\lambda/t^{1/2-o_p(1)}$, a symmetric function $f$ is $O(\epsilon)$-fooled by the random walk on an arbitrary $\lambda$-spectral expander if and only if $f$ is $O(\epsilon)$-fooled by the sticky walk $G_{\nu,p}$ for an appropriate choice of $\nu$.

This result helps explain the previously known fact that certain symmetric functions are $\lambda^{O(1)}/\sqrt{t}$-fooled by expander walks. For instance, Cohen et al.~\cite{cohen_expander_2021,cohen_expander_2022} showed that expander walks $O_p(\lambda/\sqrt{t})$-fool all the indicator functions $f(a)=\1_{\sum_ia_i=j}$ for $0\leq j\leq t$, and $O_p(\lambda^2/\sqrt{t})$-fool the threshold function $f(a)=\1_{\sum_ia_i\geq p_1t}$. Our Berry-Esseen result shows that (slightly weaker versions of) these bounds for general $G$ are implied by the respective bounds on the sticky walk.

Our Berry-Esseen result does \textit{not} explain why some symmetric functions, such as the parity function, are $e^{-\Omega(t)}$-fooled by expander walks. Rather, this exponentially small error for the parity function follows from the more general fact that the distribution $\Sigma\val(\RW_G^t)$ is smooth, in the sense that it has rapidly decaying Fourier tails; see Lemma~\ref{lem:smooth} (shown implicitly in \cite{golowich_pseudorandomness_2022-1}) and the surrounding discussion.

\begin{remark}
  In recent independent and concurrent work (posted after the submission of the undergraduate thesis containing our results \cite{golowich_random_2022}, and after the submission of this paper to STOC 2023, but before the posting of this paper online), Chiclana and Peres \cite{chiclana_local_2022} showed a local central limit theorem for expander walks, which in particular implies that $\Sigma\val(\RW_G^t)$ converges in total variation distance to an appropriate discrete normal distribution as $t\rightarrow\infty$ for any fixed graph $G$. However, Chiclana and Peres \cite{chiclana_local_2022} do not obtain a bound on the rate of this convergence, whereas our main result bounds the total variation distance by $O(\lambda/t^{1/2-o_p(1)})$ uniformly over all graphs $G$.
\end{remark}

\subsection{Open questions}
Our results lead to the following questions.
\begin{itemize}
\item Can the $t^{o_p(1)}$ factor in our bound~(\ref{eq:mainresultinf}) be removed?
\item As a regularity lemma, our result states that for every $\lambda$-spectral expander $G$, there exists a 2-state Markov chain $G'$ such that $\Sigma\val(\RW_G^t)$ is approximated by $\Sigma\val(\RW_{G'}^t)$ up to a $O(\lambda/t^{1/2-o_p(1)})$ total variation error. Can better approximations be achieved by $k$-state Markov chains $G'$ for $k>2$?
\item Are there alternative constructions of discrete normal distributions for which our bound holds? As described in Section~\ref{sec:mainresultinf}, we prove Theorem~\ref{thm:mainresultinf} for any distribution $\cN_{\sigma^2}^t$ satisfying a set of axioms, and then provide an instantiation of such a distribution $\cN_{\sigma^2}^t$ using the sticky walk. While this instantiation has the advantage of providing a regularity lemma, the sticky walk is itself nontrivial to analyze, as was the focus of Guruswami and Kumar~\cite{guruswami_pseudobinomiality_2021}. Therefore additional constructions of discrete normals satisfying our axioms would also be of interest.
\item Can Theorem~\ref{thm:mainresultinf} be extended to more general labelings $\val:V(G)\rightarrow[d]$ for $d>2$? Specifically, Golowich and Vadhan~\cite{golowich_pseudorandomness_2022-1} show a generalization of~(\ref{eq:foolsym}) for such $d$-ary labelings, so does an analogous generalization of Theorem~\ref{thm:mainresultinf} hold?
\end{itemize}

\subsection{Organization}
The remainder of this paper is organized as follows. Section~\ref{sec:prelim} describes necessary background and preliminary results. Section~\ref{sec:resultstatement} provides the formal statement of our main Berry-Esseen bound. Section~\ref{sec:proofoverview} outlines the proof of the main result, and Section~\ref{sec:beproof} presents the complete proof.

\section{Preliminaries}
\label{sec:prelim}
In this section, we describe the necessary background to present our Berry-Esseen bound. Section~\ref{sec:notation} provides the basic notation and problem setup. In Section~\ref{sec:gvpseudo}, we describe the result of Golowich and Vadhan~\cite{golowich_pseudorandomness_2022-1} that bounds how the distribution $\Sigma\val(\RW_{\cG}^t)$ changes when the expanders at some steps in $\cG$ are changed. Our proofs rely on this bound, while our main result strengthens certain implications of it.

In Sections~\ref{sec:asympvar}--\ref{sec:discnormconst}, we describe the notion of asymptotic variance as well as some basic properties, and we introduce the family of discrete normals $\cN_{\sigma^2}^t$ that we use to approximate $\Sigma\val(\RW_G^t)$ for a $\lambda$-spectral expander $G$. The proofs of results in these sections are standard or follow directly from prior work, and for completeness are provided in Appendix~\ref{app:beproofs}.

\subsection{Notation and problem setup}
\label{sec:notation}
This section introduces the basic notation and problem setup of this paper.

We use the following notation throughout. For $N\in\bN$, let $[N]=\{0,\dots,N-1\}$. For a matrix $A\in\bF^{N\times N}$, the spectral norm of $A$ is defined to be $\|A\|=\max_{x\in\bF^N\setminus\{0\}}\|Ax\|/\|x\|$. A matrix $W\in[0,1]^{N\times N}$ is a random walk matrix on $N$ vertices if the columns of $W$ sum to $1$, so that $W_{j,i}$ denotes the transition probability from vertex $i$ to vertex $j$. When the dimension $N$ is clear from context, let $I\in\bR^{N\times N}$ denote the identity matrix. Let $\vec{1}\in\bR^N$ denote the unit vector with all entries equal to $1/\sqrt{N}$, and let $J=\vec{1}\vec{1}^\top\in\bR^{N\times N}$ denote the matrix with all entries equal to $1/N$. Therefore $J$ is the random walk matrix for the $N$-vertex complete graph with self-loops.

For a regular digraph $G=(V,E)$ on $n$ vertices, the spectral expansion is defined as
\begin{equation*}
  \lambda(G) = \|G|_{\vec{1}^\perp}\| = \max_{x\perp\vec{1}}\frac{\|Gx\|}{\|x\|},
\end{equation*}
where by abuse of notation $G\in\bR^{V\times V}$ also denotes the random walk matrix of the graph $G$, so that $G_{v',v}=w_G(v,v')/\deg_G(v)$.

Given $t\in\bN$ and a sequence of random walk matrices $\cW=(W_1,\dots,W_{t-1})$ on shared vertex set $V$, let $\RW_{\cW}^t$ denote the probability distribution over $V^t$ obtained by taking a $t$-step random walk on $V$, where the $i$th step is taken according to the transition probabilities in $W_i$. Formally, to sample $(v_0,\dots,v_{t-1})\sim\RW_{\cW}^t$, the initial vertex $v_0\in V$ is chosen uniformly at random, and then for $1\leq i\leq t-1$ the vertex $v_i$ is sampled given $v_{i-1}$ according to $\Pr[v_i=v]=(W_i)_{v,v_{i-1}}$. If all $W_i$ equal some matrix $W$, we let $\RW_W^t=\RW_{\cW}^t$.

Let $G=(V,E)$ be a $\lambda$-spectral expander with some vertex labeling $\val:V\rightarrow\{0,1\}$, and let $p_b=|\val^{-1}(b)|/n$, so that $p=(p_0,p_1)$ gives the probability distribution of the label of a uniformly random vertex. For $t\in\bN$, we extend the label function component-wise to $\val:V^t\rightarrow\{0,1\}^t$. Let $\Sigma\val(\RW_G^t)=\sum_{i\in[t]}\val(\RW_G^t)_i$ denote the sum of the labels from a length-$t$ random walk on $G$.

Our goal in this paper is to study the distribution $\Sigma\val(\RW_G^t)$. Specifically, our main result shows that $\Sigma\val(\RW_G^t)$ is approximated by an appropriate discrete normal distribution up to a $O(\lambda/t^{1/2-o_p(1)})$ error in total variation distance.

\subsection{Expander walks $O(\lambda)$-fool symmetric functions}
\label{sec:gvpseudo}
As described in Section~\ref{sec:priorpseudo}, our Berry-Esseen result can be viewed as a strengthening of the result of \cite{cohen_expander_2022,golowich_pseudorandomness_2022-1} that random walks on $\lambda$-spectral expanders $O(\lambda)$-fool symmetric functions. Golowich and Vadhan~\cite{golowich_pseudorandomness_2022-1} in fact showed the following more general result, which we apply in our proof.

\begin{theorem}[\cite{golowich_pseudorandomness_2022-1}]
  \label{thm:difftail}
  Fix integers $t\geq 1$ and $1\leq u\leq t-1$. Let $\cG=(G_i)_{1\leq i\leq t-1}$ and $\cG'=(G_i')_{1\leq i\leq t-1}$ be sequences of regular graphs on a shared vertex set $V$ such that for all $i\neq u$ we have $G_i=G_i'$ with $\lambda(G_i)=\lambda(G_i')\leq 1/100$. Fix a labeling $\val:V\rightarrow\{0,1\}$ that assigns each label $b\in\{0,1\}$ to $p_b$-fraction of the vertices. Then for every $c\geq 0$,
  \begin{align}
    \label{eq:tailbound}
    \begin{split}
      \hspace{1em}&\hspace{-1em}\sum_{j\in[t+1]:|j-p_1t|\geq c}\left|\Pr[\Sigma\val(\RW_{\cG'}^t)=j]-\Pr[\Sigma\val(\RW_{\cG}^t)=j]\right| \\
      &\leq 4000 \cdot \frac{\|G_u'-G_u\| \cdot e^{-c^2/8t}}{t}.
    \end{split}
  \end{align}
\end{theorem}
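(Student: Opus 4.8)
The plan is to isolate the single step $u$ at which $\cG$ and $\cG'$ differ, to extract from this a factor $(z-1)^2$ in the generating function of the difference of the two label-sum distributions, and then to read off the bound from the resulting ``second finite difference of a smooth distribution'' structure. Throughout write $\Delta=G_u'-G_u$ (so $\|\Delta\|=\|G_u'-G_u\|$) and $f(j)=\Pr[\Sigma\val(\RW_{\cG'}^t)=j]-\Pr[\Sigma\val(\RW_{\cG}^t)=j]$.

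\emph{Reduction to one step.} Split the walk at step $u$. Conditioned on $v_{u-1}=v$, the ``past sum'' $S_{<}=\val(v_0)+\dots+\val(v_{u-1})$ has a law $\mu_v$ depending only on $G_1,\dots,G_{u-1}$, hence the same under $\cG$ and $\cG'$; conditioned on $v_u=w$, the ``future sum'' $S_{>}=\val(v_u)+\dots+\val(v_{t-1})$ has a law $\nu_w$ depending only on $G_{u+1},\dots,G_{t-1}$; by the Markov property $S_<$ and $S_>$ are conditionally independent given $(v_{u-1},v_u)$, and since all $G_i$ are regular, $v_{u-1}$ is uniform. Hence $\Pr[\Sigma\val(\RW_{\cG}^t)=\cdot]=\tfrac1n\sum_{v,w}(G_u)_{w,v}\,\mu_v*\nu_w$ and likewise for $\cG'$, so $f=\tfrac1n\sum_{v,w}\Delta_{w,v}\,\mu_v*\nu_w$; and since $\Delta$ has all row and column sums zero I may subtract the averages $\bar\mu=\tfrac1n\sum_v\mu_v$, $\bar\nu=\tfrac1n\sum_w\nu_w$ without changing the sum, obtaining
\[
f=\frac1n\sum_{v,w}\Delta_{w,v}\,(\mu_v-\bar\mu)*(\nu_w-\bar\nu).
\]

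\emph{Extracting $(z-1)^2$.} Pass to generating functions: let $M_v(z)=\bE[z^{S_<}\mid v_{u-1}=v]$, $N_w(z)=\bE[z^{S_>}\mid v_u=w]$, and $\bar M,\bar N$ their averages over $v,w$. Each polynomial $M_v(z)-\bar M(z)$ vanishes at $z=1$ (both are probability generating functions), so $M_v(z)-\bar M(z)=(z-1)\widetilde M_v(z)$ for a polynomial $\widetilde M_v$, and the vector $\widetilde M(z)=(\widetilde M_v(z))_v$ satisfies $\sum_v\widetilde M_v(z)=0$, i.e.\ $\widetilde M(z)\perp\vec1$; similarly $N_w(z)-\bar N(z)=(z-1)\widetilde N_w(z)$ with $\widetilde N(z)\perp\vec1$. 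Therefore the generating function of $f$ is
\[
F(z)=\sum_j f(j)z^j=\frac{(z-1)^2}{n}\,\widetilde N(z)^{\top}\Delta\,\widetilde M(z),
\]
so, writing $H(z)=\tfrac1n\widetilde N(z)^{\top}\Delta\,\widetilde M(z)$, the sequence $f$ is the second finite difference $f(j)=h(j)-2h(j-1)+h(j-2)$ of the signed measure $h$ with generating function $H$, and $|H(z)|\le \tfrac1n\|\widetilde N(z)\|\cdot\|\Delta\|\cdot\|\widetilde M(z)\|$.

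\emph{Smoothness of $h$ and conclusion.} On the circle $|z|=r$ with $|\log r|$ small I will show $\|\widetilde M(re^{i\theta})\|=O(\sqrt n)\cdot r^{p_1u}e^{O((\log r)^2u)}e^{-\Omega(u\theta^2)}$, and similarly for $\widetilde N$ with $u$ replaced by $t-u$; multiplying, $|H(re^{i\theta})|=O(\|\Delta\|)\cdot r^{p_1t}e^{O((\log r)^2t)}e^{-\Omega(t\theta^2)}$. Since $|re^{i\theta}-1|^2=O((\log r)^2+\theta^2)$, contour integration then gives
\[
|f(j)|=\Bigl|\frac1{2\pi i}\oint_{|z|=r}\frac{F(z)}{z^{j+1}}\,dz\Bigr|\le \frac{r^{-j}}{2\pi}\int_0^{2\pi}|re^{i\theta}-1|^2\,|H(re^{i\theta})|\,d\theta= O(\|\Delta\|)\,r^{p_1t-j}e^{O((\log r)^2t)}\Bigl(\frac{(\log r)^2}{\sqrt t}+\frac1{t^{3/2}}\Bigr),
\]
using the Gaussian integral bounds $\int_0^{2\pi} e^{-\Omega(t\theta^2)}\,d\theta=O(t^{-1/2})$ and $\int_0^{2\pi}\theta^2 e^{-\Omega(t\theta^2)}\,d\theta=O(t^{-3/2})$. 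Now take $r=e^{\epsilon}$ for the upper tail $j\ge p_1t+c$ and $r=e^{-\epsilon}$ for the lower tail, with $\epsilon\asymp\max(c/t,\,t^{-1/2})$; summing the resulting geometric series over $j$ in the tail (treating the two regimes $c\le\sqrt t$ and $c>\sqrt t$ separately, and using $\lambda(G_i)\le 1/100$ only to keep all implied constants absolute) gives $\sum_{j:|j-p_1t|\ge c}|f(j)|=O(\|\Delta\|\,e^{-c^2/8t}/t)$, which is the claim after bookkeeping of constants.

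\emph{The main obstacle} is the uniform estimate $\|\widetilde M(re^{i\theta})\|=O(\sqrt n)\cdot r^{p_1u}e^{O((\log r)^2u)}e^{-\Omega(u\theta^2)}$ (and its analogue for $\widetilde N$). Since $\|\widetilde M(z)\|^2=\tfrac{n}{|z-1|^2}\operatorname{Var}_{v}\bigl(M_v(z)\bigr)$ with the variance over uniform $v$, this requires two things about the conditional law of $S_<$: (i) that conditioning on $v_{u-1}=v$ perturbs $M_v(z)$ away from $\bar M(z)$ by only $O(|z-1|)$ times $|\bar M(z)|$ — i.e.\ the $v$-dependence enters essentially only through the last label $\val(v)$, the rest of the subwalk having mixed away — which is an expander-mixing argument unrolled over the steps before $u$, where $\lambda\le 1/100$ makes the resulting geometric series converge with an absolute constant; and (ii) a moment-generating-function/anticoncentration bound $|\bar M(re^{i\theta})|=O(r^{p_1u})e^{O((\log r)^2 u)}e^{-\Omega(u\theta^2)}$, i.e.\ the exponentially tilted sum over the length-$u$ subwalk is smooth in the sense of rapidly decaying Fourier coefficients — this is the content of the smoothness lemma (Lemma~\ref{lem:smooth}) applied not to the full walk but to subwalks and their exponential tiltings. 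Everything else — the $(z-1)^2$ extraction, the choice of contour, and the balancing of $\epsilon$ so that the geometric tail sum lands exactly at $e^{-c^2/8t}/t$ — is routine bookkeeping.
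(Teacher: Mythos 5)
Theorem~\ref{thm:difftail} is cited from~\cite{golowich_pseudorandomness_2022-1} and is not proved in this paper, so there is no in-text proof to compare against; I therefore assess your proposal on its own terms.

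Your structural reduction is sound and, I believe, captures the conceptual core of the argument in the cited work. Splitting the walk at step $u$, using double stochasticity of $G_u$ and $G_u'$ to justify subtracting $\bar\mu,\bar\nu$, and thereby extracting the factor $(z-1)^2$ from $\Delta=G_u'-G_u$ is exactly the right move: it is what turns a potential $O(1/\sqrt t)$ bound into the $O(1/t)$ bound, and it corresponds (in the transfer-matrix language used in the Appendix proof of Lemma~\ref{lem:smooth}) to the fact that both the prefix vector applied to $\vec 1$ and the suffix vector applied to $\vec 1$ each pick up a factor comparable to $|e^{-i\theta}-1|$. Taking the contour off the unit circle ($z=re^{i\theta}$) to produce the $e^{-c^2/8t}$ tail is likewise equivalent in spirit to the exponential-tilt/Chernoff device standard in these arguments.

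That said, there is a genuine gap, and you have correctly located it. The estimate
$\bigl\|\widetilde M(re^{i\theta})\bigr\|=O(\sqrt n)\,r^{p_1u}\,e^{O((\log r)^2u)}\,e^{-\Omega(u\theta^2)}$
(and its analogue for $\widetilde N$) is not a corollary of anything stated in the paper; it \emph{is} the theorem, in the sense that all of the analytic work lives there. Two separate strengthenings of Lemma~\ref{lem:smooth} are needed and neither is supplied: first, a version for $|z|\neq 1$, i.e.\ uniform control of the exponentially tilted characteristic function of a subwalk; second, and more importantly, the \emph{variance} statement that conditioning on the boundary vertex $v_{u-1}=v$ (resp.\ $v_u=w$) perturbs the subwalk PGF by at most $O(|z-1|)$ times its typical size, uniformly in $v$, with an absolute constant coming from the geometric series in $\lambda\le 1/100$. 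This requires an honest transfer-matrix computation on the reversed chain $G_i^\top$ (which is doubly stochastic with the same expansion, so the reversal itself is harmless, but still needs to be carried through). Saying this is ``the content of Lemma~\ref{lem:smooth} applied to subwalks and their exponential tiltings'' is an accurate description of what the missing lemma should look like, but it is not a proof. A secondary, lesser concern is the constant bookkeeping: your bound on $|f(j)|$ has a term proportional to $(\log r)^2/\sqrt t$, and after the optimization $\epsilon\asymp c/t$ in the regime $c\gg\sqrt t$, the resulting $c/t^{3/2}$ must be absorbed into the exponential using slack between your Gaussian constant and the target $8$; this works only if the implied constants from the (unproved) key estimate are favorable, which should be checked rather than asserted.
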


The corollary below follows by changing the graphs in the sequence $\cG$ to $J$ one at a time, and applying Theorem~\ref{thm:difftail} at each step.

\begin{corollary}[\cite{golowich_pseudorandomness_2022-1}]
  \label{cor:difftailJ}
  Fix an integer $t\geq 1$. Let $\lambda\leq 1/100$, and let $\cG=(G_i)_{1\leq i\leq t-1}$ be a sequence of regular $\lambda$-spectral expanders on shared vertex set $V$. Fix a labeling $\val:V\rightarrow\{0,1\}$ that assigns each label $b\in\{0,1\}$ to $p_b$-fraction of the vertices. Then for every $c\geq 0$,
  \begin{align*}
    \begin{split}
      \hspace{1em}&\hspace{-1em}\sum_{j\in[t+1]:|j-p_1t|\geq c}\left|\Pr[\Sigma\val(\RW_{\cG}^t)=j]-\Pr[\Sigma\val(\RW_{J}^t)=j]\right| \\
      &\leq 4000 \cdot \lambda \cdot e^{-c^2/8t}.
    \end{split}
  \end{align*}
\end{corollary}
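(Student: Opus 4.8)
The plan is to derive Corollary~\ref{cor:difftailJ} from Theorem~\ref{thm:difftail} by a hybrid argument, swapping the graphs $G_1,\dots,G_{t-1}$ to the complete graph $J$ one at a time. First I would set up the hybrid sequences: for $0\leq k\leq t-1$, let $\cG^{(k)}=(G_i^{(k)})_{1\leq i\leq t-1}$ where $G_i^{(k)}=J$ for $i\leq k$ and $G_i^{(k)}=G_i$ for $i>k$, so that $\cG^{(0)}=\cG$ and $\cG^{(t-1)}=\cG$ with all graphs replaced by $J$, i.e. $\RW_{\cG^{(t-1)}}^t=\RW_J^t$. Note that $J$ is itself a regular graph with $\lambda(J)=0\leq 1/100$, so each $\cG^{(k)}$ is a valid sequence of regular graphs satisfying the hypotheses of Theorem~\ref{thm:difftail}.

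Next, for each $k$ with $1\leq k\leq t-1$, I would apply Theorem~\ref{thm:difftail} with $u=k$ to the pair $\cG^{(k-1)}$ and $\cG^{(k)}$: these two sequences agree at every index $i\neq k$ (at index $k$, one has $G_k$ and the other has $J$), and both $G_k$ and $J$ have spectral expansion at most $1/100$. The theorem then gives, for every $c\geq 0$,
\begin{equation*}
  \sum_{j:|j-p_1t|\geq c}\left|\Pr[\Sigma\val(\RW_{\cG^{(k)}}^t)=j]-\Pr[\Sigma\val(\RW_{\cG^{(k-1)}}^t)=j]\right| \leq 4000\cdot\frac{\|J-G_k\|\cdot e^{-c^2/8t}}{t}.
\end{equation*}
Since $G_k$ is a $\lambda$-spectral expander, $J-G_k$ annihilates $\vec 1$ and acts as $-G_k|_{\vec 1^\perp}$ on $\vec 1^\perp$, so $\|J-G_k\|=\lambda(G_k)\leq\lambda$; hence each term on the right is at most $4000\lambda e^{-c^2/8t}/t$.

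Finally I would combine the $t-1$ bounds by the triangle inequality: writing $\Pr[\Sigma\val(\RW_{\cG}^t)=j]-\Pr[\Sigma\val(\RW_J^t)=j]$ as the telescoping sum $\sum_{k=1}^{t-1}\left(\Pr[\Sigma\val(\RW_{\cG^{(k-1)}}^t)=j]-\Pr[\Sigma\val(\RW_{\cG^{(k)}}^t)=j]\right)$, applying $|\sum a_k|\leq\sum|a_k|$ inside the outer sum over $j$, and swapping the order of summation, I get
\begin{equation*}
  \sum_{j:|j-p_1t|\geq c}\left|\Pr[\Sigma\val(\RW_{\cG}^t)=j]-\Pr[\Sigma\val(\RW_J^t)=j]\right| \leq \sum_{k=1}^{t-1} 4000\cdot\frac{\lambda e^{-c^2/8t}}{t} \leq 4000\cdot\lambda\cdot e^{-c^2/8t},
\end{equation*}
using $t-1\leq t$ for the last step. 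This is exactly the claimed bound. There is no real obstacle here; the only points requiring a moment of care are checking that the intermediate hybrids $\cG^{(k)}$ legitimately satisfy the hypotheses of Theorem~\ref{thm:difftail} (in particular that $J$ qualifies as a regular graph with small spectral expansion, which matches the footnote convention allowing arbitrarily large interchangeable vertex sets) and that $\|J-G_k\|=\lambda(G_k)$, both of which are immediate.
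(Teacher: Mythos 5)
Your proposal is correct and matches the paper's own derivation: the paper obtains Corollary~\ref{cor:difftailJ} precisely by the hybrid/telescoping argument you describe, swapping each $G_i$ for $J$ one step at a time and invoking Theorem~\ref{thm:difftail} with $\|J-G_k\|=\lambda(G_k)\leq\lambda$ at each of the $t-1$ steps.
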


Letting $c=0$ in Corollary~\ref{cor:difftailJ} gives the total variation bound~(\ref{eq:foolsym}) described in Section~\ref{sec:priorpseudo}, which shows that expander walks $O(\lambda)$-fool symmetric functions. The $c>0$ case of Theorem~\ref{thm:difftail} and Corollary~\ref{cor:difftailJ} unites this total variation bound with a tail bound, which strengthens the expander walk Chernoff bound as described in \cite{golowich_pseudorandomness_2022-1}. We apply this tail bound in our proofs to show that we may focus on bounding components of the relevant distributions that lie near the mean.

As described in Section~\ref{sec:proofoverview} and Section~\ref{sec:beproof}, the main idea in the proof of our Berry-Esseen bound is to induct on $t$ using Theorem~\ref{thm:difftail}. Specifically, Theorem~\ref{thm:difftail} implies that if we split a length-$t$ random walk on $G$ into a sequence of $\ell$ length-$t/\ell$ independent walks on $G$ by replacing $\ell-1$ of the steps on $G$ with steps on $J$, then the distribution of the sum of the labels is perserved up to a $O(\ell\cdot\lambda/t)$ total variation error. Thus setting $\ell=\sqrt{t}$, we reduce the study of $\Sigma\val(\RW_G^t)$ to $\Sigma\val(\RW_G^{\sqrt{t}})$, from which we apply induction.

Our proof will also use the following ``smoothness'' lemma that is shown implicitly by Golowich and Vadhan~\cite{golowich_pseudorandomness_2022-1} in their proof of Theorem~\ref{thm:difftail}.

\begin{lemma}
  \label{lem:smooth}
  For $\lambda\leq 1/100$, let $G=(V,E)$ be a $\lambda$-spectral expander with vertex labeling $\val:V\rightarrow\{0,1\}$ that assigns each label $b\in\{0,1\}$ to $p_b$-fraction of the vertices. Then for all $-\pi\leq\theta\leq\pi$,
  \begin{equation*}
    |\bE[e^{-i\theta\Sigma\val(\RW_G^t)}]| \leq e^{-p_0p_1t\theta^2/20}.
  \end{equation*}
\end{lemma}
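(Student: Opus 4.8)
The plan is to control the characteristic function $\bE[e^{-i\theta\Sigma\val(\RW_G^t)}]$ by expressing it as a product of transfer matrices and bounding the operator norm of each factor. Writing $D_\theta = \operatorname{diag}(e^{-i\theta\val(v)})_{v\in V}$ for the diagonal matrix recording the phase contributed by each vertex's label, one has
\begin{equation*}
  \bE[e^{-i\theta\Sigma\val(\RW_G^t)}] = \vec{1}^{\,\top} D_\theta (G D_\theta)^{t-1} \vec{1},
\end{equation*}
since the random walk starts from the uniform distribution $\vec 1 \cdot \sqrt N / \sqrt N$ (up to the normalization bookkeeping) and each step multiplies by $G$ and then by the phase matrix $D_\theta$. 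So it suffices to show that the ``twisted'' walk operator $M_\theta := G D_\theta$ has the property that $\|M_\theta^{t-1}\vec 1\|$ (appropriately projected) decays like $e^{-\Omega(p_0 p_1 t \theta^2)}$.

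The key step is a single-step contraction estimate. Decompose any unit vector as $x = \alpha \vec 1 + y$ with $y \perp \vec 1$. Applying $D_\theta$: the diagonal entries $e^{-i\theta\val(v)}$ take only two values, $1$ on the $p_0 n$ vertices labeled $0$ and $e^{-i\theta}$ on the $p_1 n$ vertices labeled $1$. Thus $D_\theta \vec 1 = (p_0 + p_1 e^{-i\theta})\vec 1 + z$ where $z \perp \vec 1$ has $\|z\|^2 = p_0 p_1 |1 - e^{-i\theta}|^2 = 2 p_0 p_1 (1-\cos\theta) \asymp p_0 p_1 \theta^2$ for $|\theta|\le\pi$. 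Meanwhile $|p_0 + p_1 e^{-i\theta}|^2 = 1 - 2p_0p_1(1-\cos\theta)$, so applying $D_\theta$ to the ``flat'' direction shrinks its flat component to roughly $1 - p_0p_1\theta^2$ in squared norm while dumping a comparable amount of mass into the orthogonal complement. Then applying $G$ preserves the flat component but contracts the orthogonal component by $\lambda \le 1/100$. Iterating, the flat component of $M_\theta^k \vec 1$ stays small: after each step it is multiplied by $p_0 + p_1 e^{-i\theta}$ (in modulus $\sqrt{1 - 2p_0p_1(1-\cos\theta)}$) plus a contribution of size at most $\lambda$ times the accumulated orthogonal mass. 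Since $\lambda$ is tiny, a geometric-series bookkeeping argument shows the total quantity $|\vec1^\top D_\theta M_\theta^{t-1}\vec 1|$ is bounded by something like $(1 - c\, p_0p_1\theta^2)^{t/2} \le e^{-c' p_0 p_1 t \theta^2}$, and choosing constants so that $c' \ge 1/20$ gives the claim. (One should double check whether the intended proof instead extracts this directly as a byproduct of the analysis in \cite{golowich_pseudorandomness_2022-1} underlying Theorem~\ref{thm:difftail}, in which case the bound may simply be quoted; the statement says it is ``shown implicitly'' there.)

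The main obstacle is handling the interaction between the two directions carefully enough to get a clean exponential rate rather than just \emph{some} decay. The subtlety is that $D_\theta$ both shrinks the flat component \emph{and} couples it to the orthogonal subspace, and $G$ then contracts only the latter; a naive triangle-inequality bound on $\|M_\theta\|$ gives something useless (close to $1$), so one genuinely needs the two-dimensional (flat vs.\ orthogonal) invariant-cone style argument, tracking a pair of scalars $(a_k, b_k) = (|\text{flat part of } M_\theta^k\vec1|,\ \|\text{orthogonal part}\|)$ through the recursion $a_{k+1} \le |p_0+p_1e^{-i\theta}| a_k + \lambda b_k$ (roughly — the precise constant depends on how $D_\theta$ spreads mass) and $b_{k+1} \le \lambda(\cdots)$, and showing the dominant eigenvalue of the associated $2\times 2$ transition is at most $1 - \Omega(p_0p_1\theta^2)$ when $\lambda \le 1/100$. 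The endpoint factor $\vec1^\top D_\theta$ contributes only an $O(1)$ multiplicative constant, which is absorbed by taking the constant in the exponent slightly smaller than optimal (the stated $1/20$ leaves room for this).
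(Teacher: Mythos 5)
Your starting identity---expressing the characteristic function as $\vec{1}^\top$ times an alternating product of walk and phase matrices applied to $\vec{1}$---is exactly the paper's starting point; from there the two arguments diverge. The paper conjugates the transfer matrix $GP_\theta^{(0)}$ (with $P_\theta^{(0)}$ the centered version of your $D_\theta$) by the fixed matrix $F = J + (I-J)/10$, which amounts to working in a rescaled norm that stretches $\vec{1}^\perp$ by a factor of $10$. In that norm $F^{-1}GF^{-1}$ has operator norm at most $1$ precisely because $\lambda \le 1/100$, so each factor of $G$ is a genuine contraction, and all the $\theta$-decay is absorbed into a single one-dimensional estimate $\|FP_\theta^{(0)}F\| \le e^{-p_0p_1\theta^2/20}$, which the paper imports from Lemma~28 of the full version of~\cite{golowich_pseudorandomness_2022-1}. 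So your closing parenthetical guessing that the bound is extracted as a byproduct of the analysis behind Theorem~\ref{thm:difftail} is exactly right. You instead run a two-dimensional flat/orthogonal recursion without a weighted norm; that is a valid alternative strategy and, properly carried out, would give a self-contained proof, but the paper's conjugation is slicker because it collapses the two-dimensional dynamics to a single operator-norm inequality.

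There is, however, a concrete error in the recursion you wrote down that is not just a ``rough constant.'' You have $a_{k+1} \lesssim |p_0+p_1e^{-i\theta}|\,a_k + \lambda\,b_k$, but the coefficient on $b_k$ should not be $\lambda$. Since $G$ fixes the $\vec{1}$-direction, the new flat component is entirely the flat component of $D_\theta(a_k\vec{1}+y_k)$, and the contribution from $y_k$ is $\vec{1}^\top D_\theta y_k$. Using $\vec{1}^\top y_k = 0$ and subtracting the mean $p_0+p_1e^{-i\theta}$ from the diagonal entries of $D_\theta$, Cauchy--Schwarz gives $|\vec{1}^\top D_\theta y_k| \le \sqrt{2p_0p_1(1-\cos\theta)}\,\|y_k\|$. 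So the back-coupling from $\vec{1}^\perp$ into the flat direction is governed by $\theta$ through $D_\theta$, not by $\lambda$ through $G$; $\lambda$ only enters the $b_{k+1}$ line. This matters: with $\lambda$ in both off-diagonal slots, the $2\times 2$ transition matrix has spectral radius \emph{exceeding} $1$ once $|\theta| \lesssim \lambda/\sqrt{p_0p_1}$ (at $\theta=0$ and $\lambda=1/100$ the dominant eigenvalue is already $\approx 1.0001$), so your claimed conclusion that the dominant eigenvalue is $1-\Omega(p_0p_1\theta^2)$ would be false. With the correct coupling $\mu := \sqrt{2p_0p_1(1-\cos\theta)}$, the transfer matrix is essentially $\bigl(\begin{smallmatrix}\sqrt{1-\mu^2} & \mu \\ \lambda\mu & \lambda\end{smallmatrix}\bigr)$, whose dominant eigenvalue is roughly $1-\mu^2\bigl(\tfrac12 - \tfrac{\lambda}{1-\lambda}\bigr) \le 1 - 0.48\,\mu^2$ for $\lambda \le 1/100$, and the argument closes. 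So the strategy is sound, but that off-diagonal constant is load-bearing and cannot be left as ``roughly.''
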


Lemma~\ref{lem:smooth} implies that the distribution of $\Sigma\val(\RW_G^t)$ has rapidly decaying Fourier tails. For instance, setting $\theta=\pi$ in this lemma recovers the previously known fact that expander walks $e^{-\Omega_p(t)}$-fool the parity function.

\subsection{Asymptotic variance}
\label{sec:asympvar}
This section describes asymptotic variance. The results below are standard; proofs are provided in Appendix~\ref{app:beproofs} for completeness.

\begin{definition}
  For a sequence $X=(X^t)^{t\in\bN}$ of probability distributions over $\bR$, the \textbf{asymptotic variance} of $X$ is
  \begin{align*}
    \sigma^2(X)
    &= \lim_{t\rightarrow\infty}\frac{1}{t}\Var(X^t).
  \end{align*}
\end{definition}

The following formula for the asymptotic variance $\sigma^2(\Sigma\val(\RW_G^t))$ is well known; it for instance is a special case of the definition of asymptotic variance in Kloeckner~\cite{kloeckner_effective_2019}.

\begin{lemma}
  \label{lem:asympvardef}
  Let $G=(V,E)$ be a regular graph with labeling $\val:V\rightarrow\{0,1\}$ that assigns each label $b\in\{0,1\}$ to $p_b$-fraction of the $n$ vertices. Viewing $\val$ and $\val-p_1:V\rightarrow\bR$ as vectors in $\bR^V$, then
  \begin{equation*}
    \sigma^2(\Sigma\val(\RW_G^t)) = p_0p_1 + 2\sum_{i=1}^\infty \frac{1}{n}(\val-p_1)^\top G^i \val.
  \end{equation*}
\end{lemma}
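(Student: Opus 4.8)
The plan is to compute $\Var(\Sigma\val(\RW_G^t))$ exactly for finite $t$, divide by $t$, and pass to the limit. Write $Y_i = \val(\RW_G^t)_i$ for the indicator random variable at step $i$, so $\Sigma\val(\RW_G^t) = \sum_{i\in[t]} Y_i$ and $\Var(\Sigma\val(\RW_G^t)) = \sum_{i\in[t]}\Var(Y_i) + \sum_{i\neq j}\Cov(Y_i,Y_j)$. Since the walk starts at the stationary (uniform) distribution and $G$ is regular, each $v_i$ is marginally uniform, so $\bE[Y_i] = p_1$ and $\Var(Y_i) = p_0p_1$ for every $i$, contributing $t\,p_0p_1$ to the sum.

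Next I would evaluate the covariances. For $i < j$, condition on $v_i$: the pair $(v_i, v_j)$ has $v_i$ uniform and $v_j$ distributed as $G^{j-i}$ applied to the point mass at $v_i$. Writing everything in vector notation on $\bR^V$ with the uniform inner product $\langle f, g\rangle = \frac1n\sum_v f(v)g(v)$, one gets $\bE[Y_i Y_j] = \frac1n \val^\top G^{j-i}\val$ (being careful that $G$ here is the column-stochastic random walk matrix, so $G^{j-i}$ pushes forward a distribution), hence $\Cov(Y_i,Y_j) = \frac1n\val^\top G^{j-i}\val - p_1^2 = \frac1n(\val-p_1)^\top G^{j-i}(\val-p_1)$ using that the all-ones vector is fixed by $G$ and $G^\top$. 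Actually it is cleaner to note $\frac1n(\val-p_1)^\top G^k \val = \frac1n(\val - p_1)^\top G^k(\val - p_1)$ since $(\val-p_1)\perp\vec 1$ in the uniform inner product and $G^\top \vec 1 = \vec 1$. Summing over all ordered pairs with $k = |i-j|$ ranging from $1$ to $t-1$, each value of $k$ occurs $2(t-k)$ times, giving
\begin{equation*}
  \Var(\Sigma\val(\RW_G^t)) = t\,p_0p_1 + 2\sum_{k=1}^{t-1}(t-k)\cdot\frac1n(\val-p_1)^\top G^k\val.
\end{equation*}
Dividing by $t$ yields $p_0p_1 + 2\sum_{k=1}^{t-1}(1 - k/t)\cdot\frac1n(\val-p_1)^\top G^k\val$.

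It remains to take $t\to\infty$ and justify that the $\sum_{k=1}^{t-1}(1-k/t)(\cdots)$ term converges to $\sum_{k=1}^\infty \frac1n(\val-p_1)^\top G^k\val$. This is where the expansion hypothesis does the work: since $\val - p_1 \perp \vec 1$, we have $\|G^k(\val-p_1)\| \leq \lambda(G)^k\|\val-p_1\|$ (for undirected/normal $G$; more generally this is exactly the spectral expansion bound applied $k$ times to a vector in $\vec 1^\perp$), so $|\frac1n(\val-p_1)^\top G^k\val| \leq \lambda(G)^k \cdot p_0p_1$, and the series converges absolutely and geometrically. Then dominated convergence (the tail weights $1-k/t$ are bounded by $1$ and tend pointwise to $1$) gives the claimed limit. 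The one subtlety worth a remark is the directed case: if $G$ is merely a regular digraph, $\lambda(G)^k$ need not dominate $\|G^k|_{\vec1^\perp}\|$ termwise, but one still has $\frac1n\sum_k |(\val-p_1)^\top G^k\val| < \infty$ under the standing assumption that $\lambda(G)<1$ via submultiplicativity of the operator norm on $\vec1^\perp$ ($\|G^k|_{\vec 1^\perp}\| \le \lambda(G)^k$ still holds since $G$ preserves $\vec1^\perp$), so the argument goes through unchanged. The main obstacle is really just bookkeeping — getting the column-stochastic conventions straight so that $\bE[Y_iY_j]$ comes out as $\frac1n\val^\top G^{|i-j|}\val$ rather than its transpose — and then the convergence step is routine given Lemma-style exponential decay.
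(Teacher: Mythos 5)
Your proof is correct and follows essentially the same route as the paper's: expand $\Var(\Sigma\val(\RW_G^t))$ into diagonal and covariance terms, reduce each covariance to $\frac1n(\val-p_1)^\top G^k(\val-p_1)$ via the conditional distribution $G^k\1_v$ and double stochasticity, arrive at $\Var = p_0p_1 t + 2\sum_{k=1}^{t-1}(t-k)\frac1n(\val-p_1)^\top G^k\val$, then pass to the limit using geometric decay of $\|G^k|_{\vec1^\perp}\|$. The only difference is cosmetic: the paper bounds $\sigma^2 - \frac1t\Var$ explicitly (since it also needs the quantitative rate for Lemma~\ref{lem:asympvarconv}), whereas you invoke dominated convergence, which suffices for this lemma alone.
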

\begin{corollary}
  \label{cor:expasympvar}
  For a $\lambda$-spectral expander $G$,
  \begin{equation*}
    |\sigma^2(\Sigma\val(\RW_G^t)) - p_0p_1| \leq \frac{2}{1-\lambda} \cdot \lambda \cdot p_0p_1.
  \end{equation*}
\end{corollary}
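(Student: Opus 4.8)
The plan is to derive Corollary~\ref{cor:expasympvar} directly from the series formula in Lemma~\ref{lem:asympvardef} by bounding each term of the sum using the spectral expansion of $G$. Writing $w = \val - p_1 \in \bR^V$, the key observation is that $w$ is orthogonal to $\vec{1}$: indeed $\vec{1}^\top w = \frac{1}{\sqrt{n}}\sum_{v\in V}(\val(v)-p_1) = \frac{1}{\sqrt{n}}(|\val^{-1}(1)| - p_1 n) = 0$. Since $\val = w + p_1\vec{1}\sqrt{n}$ and $G^i$ fixes $\vec{1}$ while $w\perp\vec{1}$, we get $w^\top G^i \val = w^\top G^i w + p_1\sqrt{n}\, w^\top G^i \vec{1} = w^\top G^i w$, because $w^\top G^i\vec{1} = w^\top\vec{1} = 0$ (using that $G^i$ is a regular random walk matrix so $G^i\vec{1} = \vec{1}$, which also requires the column-stochastic/regularity convention; if instead $G$ is the row-stochastic walk one should transpose appropriately, but the orthogonality argument is unchanged).

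Next I would estimate $|w^\top G^i w|$. Decompose $w = \sum_j c_j u_j$ in an eigenbasis, or more simply: since $w\perp\vec{1}$, the definition $\lambda(G) = \|G|_{\vec{1}^\perp}\|$ gives $\|G^i w\| \leq \lambda^i\|w\|$, because $G^i w$ stays in $\vec{1}^\perp$ at every step. Hence by Cauchy--Schwarz, $|w^\top G^i w| \leq \|w\|\cdot\|G^i w\| \leq \lambda^i\|w\|^2$. Now $\|w\|^2 = \sum_{v}(\val(v)-p_1)^2 = |\val^{-1}(1)|(1-p_1)^2 + |\val^{-1}(0)|p_1^2 = n(p_1 p_0^2 + p_0 p_1^2) = n\,p_0 p_1$. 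Therefore $\frac{1}{n}|w^\top G^i w| \leq \lambda^i p_0 p_1$.

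Plugging this into Lemma~\ref{lem:asympvardef}:
\begin{equation*}
  |\sigma^2(\Sigma\val(\RW_G^t)) - p_0 p_1| = \left|2\sum_{i=1}^\infty \frac{1}{n}w^\top G^i\val\right| \leq 2\sum_{i=1}^\infty \lambda^i p_0 p_1 = \frac{2\lambda}{1-\lambda}\,p_0 p_1,
\end{equation*}
where the geometric series converges because $\lambda < 1$ (in fact $\lambda \leq 1/100$ in the intended regime, but $\lambda<1$ suffices). This is exactly the claimed bound. I do not anticipate a genuine obstacle here — the only points requiring care are (i) confirming the inner product $w^\top G^i\val$ reduces to $w^\top G^i w$ via the orthogonality $w\perp\vec{1}$, (ii) getting the convention right for which direction $G$ (or $G^\top$) acts as a contraction on $\vec{1}^\perp$ so that $\|G^i w\|\leq\lambda^i\|w\|$ holds with the paper's column-stochastic convention (for a regular graph $G$ and $G^\top$ have the same singular values restricted to $\vec{1}^\perp$, so this is immediate), and (iii) the elementary computation $\|\val - p_1\vec{1}\sqrt n\|^2 = n p_0 p_1$.
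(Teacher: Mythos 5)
Your proof is correct and follows essentially the same route as the paper: it invokes Lemma~\ref{lem:asympvardef}, uses the orthogonality $\val - p_1 \perp \vec{1}$ to replace $\val$ by $\val - p_1$ on the right, bounds $|(\val-p_1)^\top G^i(\val-p_1)| \leq \lambda^i\|\val-p_1\|^2$ via the spectral expansion, computes $\|\val-p_1\|^2 = n p_0 p_1$, and sums the geometric series. The extra care you take about the column-stochastic convention is a fine sanity check but does not change the argument.
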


For a $\lambda$-spectral expander $G$, the following lemma shows a $O(\lambda p_0p_1/t)$ bound on the rate of convergence of $\Var(\Sigma\val(\RW_G^t))/t$ to $\sigma^2(\Sigma\val(\RW_G^t))$.

\begin{lemma}
  \label{lem:asympvarconv}
  Let $G=(V,E)$ be a regular $\lambda$-spectral expander with labeling $\val:V\rightarrow\{0,1\}$ that assigns each label $b\in\{0,1\}$ to $p_b$-fraction of the $n$ vertices. Then
  \begin{equation*}
    \left|\frac{1}{t}\Var(\Sigma\val(\RW_G^t)) - \sigma^2(\Sigma\val(\RW_G^t))\right| \leq \frac{2}{(1-\lambda)^2}\cdot\frac{\lambda}{t} \cdot p_0p_1.
  \end{equation*}
\end{lemma}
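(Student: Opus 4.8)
The plan is to write $\Var(\Sigma\val(\RW_G^t))$ explicitly in terms of the autocovariance sequence of the walk, and then compare it term-by-term against the series for $\sigma^2(\Sigma\val(\RW_G^t))$ supplied by Lemma~\ref{lem:asympvardef}.

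First I would set up the covariance structure. Write $X_k=\val(\RW_G^t)_k$ for $k\in[t]$, so that $\Sigma\val(\RW_G^t)=\sum_{k\in[t]}X_k$. Since $G$ is regular, its random walk matrix satisfies $G\vec{1}=\vec{1}$ and $\vec{1}^\top G=\vec{1}^\top$, so the uniform distribution is stationary; hence every $X_k$ is $\Bern(p_1)$, and for $k<l$ the pair $((\RW_G^t)_k,(\RW_G^t)_l)$ is a uniform vertex followed by an $(l-k)$-step walk. A direct computation, using that $\val-p_1$ (viewed as a vector in $\bR^V$) is orthogonal to $\vec{1}$ and that $\vec{1}^{\,\perp}$ is $G$-invariant, gives $\Cov(X_k,X_l)=c_{l-k}$ where $c_i:=\tfrac1n(\val-p_1)^\top G^i(\val-p_1)$, with $c_0=\tfrac1n\sum_v(\val(v)-p_1)^2=p_0p_1=\Var(X_k)$ and, for $i\ge 1$, $c_i=\tfrac1n(\val-p_1)^\top G^i\val$ (the two expressions agree because $G^i\vec{1}=\vec{1}$). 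Since there are $t-i$ pairs at lag $i$,
\[
  \frac1t\Var(\Sigma\val(\RW_G^t))=c_0+2\sum_{i=1}^{t-1}\Bigl(1-\frac it\Bigr)c_i .
\]
Subtracting this from $\sigma^2(\Sigma\val(\RW_G^t))=c_0+2\sum_{i=1}^\infty c_i$ (Lemma~\ref{lem:asympvardef}) yields
\[
  \sigma^2(\Sigma\val(\RW_G^t))-\frac1t\Var(\Sigma\val(\RW_G^t))
  = \frac2t\sum_{i=1}^{t-1}i\,c_i + 2\sum_{i=t}^\infty c_i .
\]

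Next I would bound $|c_i|$. Because $\val-p_1\perp\vec{1}$ and $\lambda=\lambda(G)=\|G|_{\vec{1}^{\,\perp}}\|$, induction gives $\|G^i(\val-p_1)\|\le\lambda^i\|\val-p_1\|$, so by Cauchy--Schwarz together with $\|\val-p_1\|^2=np_0p_1$ we get $|c_i|\le\lambda^i\cdot\tfrac1n\|\val-p_1\|^2=\lambda^i p_0p_1$. Finally, since $i/t\ge 1$ for $i\ge t$, I fold the tail sum into the first sum:
\[
  \Bigl|\sigma^2(\Sigma\val(\RW_G^t))-\tfrac1t\Var(\Sigma\val(\RW_G^t))\Bigr|
  \le\frac2t\sum_{i=1}^{t-1}i|c_i|+2\sum_{i=t}^\infty|c_i|
  \le\frac2t\sum_{i=1}^{\infty}i\lambda^i p_0p_1
  =\frac{2}{(1-\lambda)^2}\cdot\frac{\lambda}{t}\cdot p_0p_1,
\]
using $\sum_{i\ge 1}i\lambda^i=\lambda/(1-\lambda)^2$ for $\lambda\le 1/100<1$. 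This is exactly the claimed bound.

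The computations are all routine; the only mild points of care are getting the $(1-i/t)$ weights right in the variance expansion, and the observation in the last step that the leftover tail $2\sum_{i\ge t}c_i$ should be absorbed via $i/t\ge 1$ rather than estimated on its own — a separate estimate would produce an extra $\Theta(p_0p_1\lambda^t/(1-\lambda))$ term and would not match the stated constant. I do not anticipate any genuine obstacle.
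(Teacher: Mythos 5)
Your proof is correct and follows essentially the same route as the paper's: expand the variance via the autocovariance sequence $c_i=\tfrac1n(\val-p_1)^\top G^i(\val-p_1)$, subtract from the series for $\sigma^2$ in Lemma~\ref{lem:asympvardef}, bound $|c_i|\le\lambda^i p_0p_1$, and absorb the tail $2\sum_{i\ge t}c_i$ into the weighted sum using $i/t\ge 1$. The paper performs that last absorption silently in its second-to-last displayed inequality; you simply make the observation explicit.
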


\subsection{Sticky random walk}
\label{sec:sticky}
In this section, we introduce the $\lambda$-sticky, $p$-biased random walk, which is a particularly simple random walk with spectral expansion $\lambda$ and label weights $p$. We will use the sticky walk to construct discrete normal distributions, and therefore our Berry-Esseen bound can be viewed as a regularity lemma that reduces the study of arbitrary expanders to the much simpler class of sticky walks.

The special case of the sticky walk on $|V|=2$ vertices with $p_0=p_1=1/2$ was studied extensively by Guruswami and Kumar~\cite{guruswami_pseudobinomiality_2021}. Here we describe a more general sticky walk for arbitrary $p$.

\begin{definition}
  \label{def:sticky}
  Fix a vertex set $V=V_0\sqcup V_1$ with labeling $\val:V\rightarrow\{0,1\}$ given by $\val(v)=b$ for $v\in V_b$, so that $p_0=|V_0|/|V|$ and $p_1=|V_1|/|V|$. For subsets $A,B\subseteq V$, let $J_{A,B}\in\bR^{A\times B}$ denote the matrix with all entries equal to $1/|A|$. For $0\leq\lambda\leq 1$, define the \textbf{$\lambda$-sticky, $p$-biased random walk matrix $G_{\lambda,p}\in\bR^{V\times V}$} by
  \begin{equation*}
    G_{\lambda,p} = \begin{pmatrix}
      (p_0+p_1\lambda)J_{V_0,V_0} & (p_0-p_0\lambda)J_{V_0,V_1} \\
      (p_1-p_1\lambda)J_{V_1,V_0} & (p_1+p_0\lambda)J_{V_1,V_1}
    \end{pmatrix}.
  \end{equation*}
  That is, $G_{\lambda,p}$ treats all vertices within $V_b$ identically for each $b=0,1$, and if $(v,v')$ represents a 1-step random walk on $G_{\lambda,p}$, then the transition probabilities are
  \begin{align*}
    \Pr[v'\in V_0|v\in V_0] &= p_0+p_1\lambda = (1-\lambda)p_0+\lambda \\
    \Pr[v'\in V_0|v\in V_1] &= p_0-p_0\lambda = (1-\lambda)p_0 \\
    \Pr[v'\in V_1|v\in V_0] &= p_1-p_1\lambda = (1-\lambda)p_1 \\
    \Pr[v'\in V_1|v\in V_1] &= p_1+p_0\lambda = (1-\lambda)p_1+\lambda.
  \end{align*}
\end{definition}

We show that the $\lambda$-sticky random walk is indeed a $\lambda$-spectral expander.

\begin{lemma}
  \label{lem:stickyexp}
  $\lambda(G_{\lambda,p})=\lambda$.
\end{lemma}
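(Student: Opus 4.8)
The plan is to diagonalize $G_{\lambda,p}$ explicitly. I would first observe that $\bR^V$ splits orthogonally as $\bR^V = \spn(\vec 1)\oplus\spn(w)\oplus W_1$, where $w := p_1\1_{V_0}-p_0\1_{V_1}$ and $W_1 := \{x\in\bR^V:\ \1_{V_0}^\top x = \1_{V_1}^\top x = 0\}$ is the space of vectors summing to zero on each block. Checking this splitting is routine: $W_1\perp\vec 1$ and $W_1\perp w$ are immediate from the block‑sum‑zero condition, $\langle\vec 1,w\rangle$ is proportional to $p_1|V_0|-p_0|V_1| = 0$ since $|V_b| = p_b|V|$, and the dimensions $1+1+(|V|-2)$ exhaust $\bR^V$ (here I use $p_0,p_1>0$ so that both blocks are nonempty; otherwise $G_{\lambda,p}$ degenerates to the complete‑graph walk $J$).

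Next I would compute the action of $G_{\lambda,p}$ on the three summands. Since every block of $G_{\lambda,p}$ is a scalar multiple of an all‑equal matrix $J_{A,B}$, and $J_{A,B}$ kills any vector on $B$ that sums to zero, we get $G_{\lambda,p}x = 0$ for all $x\in W_1$. Because $G_{\lambda,p}$ is doubly stochastic (indeed symmetric) — the row sum of a row indexed by $V_b$ equals $1$ by a one‑line computation using $|V_{1-b}|/|V_b| = p_{1-b}/p_b$ — we have $G_{\lambda,p}\vec 1 = \vec 1$. Finally, a direct block‑by‑block evaluation of $G_{\lambda,p}w$: on the $V_0$‑block the $v$‑entry is $(p_0+p_1\lambda)p_1 - (1-\lambda)p_0p_1 = \lambda p_1$, and on the $V_1$‑block it is $\lambda(-p_0)$, using $p_0+p_1 = 1$; hence $G_{\lambda,p}w = \lambda w$.

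With the eigendata $(\vec 1\mapsto 1,\ w\mapsto\lambda,\ W_1\mapsto 0)$ in hand, I would finish as follows: $\vec 1^\perp = \spn(w)\oplus W_1$ orthogonally, so any $x\perp\vec 1$ can be written $x = cw + x_1$ with $x_1\in W_1$, giving $G_{\lambda,p}x = c\lambda w$ and, by the Pythagorean theorem, $\|G_{\lambda,p}x\|^2 = \lambda^2 c^2\|w\|^2 \le \lambda^2(c^2\|w\|^2+\|x_1\|^2) = \lambda^2\|x\|^2$, with equality at $x = w$. Therefore $\lambda(G_{\lambda,p}) = \lambda$ (using $0\le\lambda\le 1$ so that $\lambda = |\lambda|$). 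I do not anticipate a genuine obstacle; the only points requiring care are confirming that the stated splitting really exhausts $\bR^V$ (so that no eigenvalue is overlooked) and the short arithmetic verifying $G_{\lambda,p}w = \lambda w$.
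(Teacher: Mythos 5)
Your proof is correct. It takes a closely related but somewhat more explicit route than the paper's. The paper writes $G_{\lambda,p} = (1-\lambda)J + \lambda W$ with $W = \begin{pmatrix}J_{V_0,V_0}&0\\0&J_{V_1,V_1}\end{pmatrix}$, immediately gets the upper bound $\lambda(G_{\lambda,p})\le\lambda$ from $\|W\|=1$ (since $J|_{\vec{1}^\perp}=0$), and then gets the matching lower bound by exhibiting the single eigenvector $w = p_1\1_{V_0}-p_0\1_{V_1}$ with eigenvalue $\lambda$. You instead carry out a full spectral decomposition on $\spn(\vec 1)\oplus\spn(w)\oplus W_1$ and read off $\lambda(G_{\lambda,p})$ from the eigendata. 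Both arguments hinge on the same eigenvector $w$; what the paper's version buys is brevity (the norm bound on the convex combination dispenses with identifying the kernel $W_1$), while yours buys a complete picture of the spectrum ($1$, $\lambda$, and $0$ with multiplicity $|V|-2$), which is arguably more informative though not needed for the lemma. One small point worth making explicit in your writeup: the symmetry of $G_{\lambda,p}$, which you invoke to conclude $G_{\lambda,p}\vec 1 = \vec 1$ from column-stochasticity, relies on the scaling $(p_0-p_0\lambda)/|V_0| = (p_1-p_1\lambda)/|V_1|$, which holds because $p_b = |V_b|/|V|$ — you gesture at this but it deserves the one line.
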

\begin{proof}
  By definition
  \begin{equation*}
    G_{\lambda,p} = (1-\lambda)J_{V,V}+\lambda W
  \end{equation*}
  for 
  \begin{equation*}
    W = \begin{pmatrix}J_{V_0,V_0}&0\\0&J_{V_1,V_1}\end{pmatrix}.
  \end{equation*}
  We have $\|W\|=1$, as $W$ acts as $J$ on the orthogonal subspaces $\bR^{V_0}$ and $\bR^{V_1}$ of $\bR^V$. Thus $\lambda(G_{\lambda,p})\leq\lambda$. The opposite inequality follows from the fact that $p_1\1_{V_0}-p_0\1_{V_1}\in\vec{1}^\perp$ is an eigenvector of $G_{\lambda,p}$ with eigenvalue $\lambda$, as is evident from the decomposition of $G_{\lambda,p}$ above.
\end{proof}

Below, we compute the asymptotic variance of the sticky walk.

\begin{lemma}
  \label{lem:stickyvar}
  Define $G_{\lambda,p}$ and $\val$ as in Definition~\ref{def:sticky}. Then
  \begin{equation*}
    \sigma^2(\Sigma\val(\RW_{G_{\lambda,p}}^t)) = p_0p_1 \cdot \frac{1+\lambda}{1-\lambda}.
  \end{equation*}
\end{lemma}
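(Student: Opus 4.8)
The plan is to apply the series formula for asymptotic variance from Lemma~\ref{lem:asympvardef}, exploiting the fact that the centered label vector is an eigenvector of $G_{\lambda,p}$. Write $\1_V=\1_{V_0}+\1_{V_1}\in\bR^V$ for the all-ones vector (so $\1_V=\sqrt n\,\vec 1$ with $n=|V|$), and identify $\val\in\bR^V$ with $\1_{V_1}$. Then
\[
  \val-p_1 \;=\; \1_{V_1}-p_1\1_V \;=\; p_0\1_{V_1}-p_1\1_{V_0},
\]
which is, up to sign, exactly the vector shown in the proof of Lemma~\ref{lem:stickyexp} to lie in $\vec 1^\perp$ and to be an eigenvector of $G_{\lambda,p}$ with eigenvalue $\lambda$. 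Hence $G_{\lambda,p}^i(\val-p_1)=\lambda^i(\val-p_1)$ for every $i\ge 1$.

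Next I would peel off the stationary component. From the decomposition $G_{\lambda,p}=(1-\lambda)J_{V,V}+\lambda W$ in the proof of Lemma~\ref{lem:stickyexp}, both $J_{V,V}\1_V=\1_V$ and $W\1_V=\1_V$, so $G_{\lambda,p}\1_V=\1_V$. Writing $\val=(\val-p_1)+p_1\1_V$ therefore gives $G_{\lambda,p}^i\val=\lambda^i(\val-p_1)+p_1\1_V$, and pairing with $\val-p_1$ (which is orthogonal to $\1_V$) yields
\[
  \frac1n(\val-p_1)^\top G_{\lambda,p}^i\val \;=\; \frac{\lambda^i}{n}\,\|\val-p_1\|^2 .
\]
A direct computation gives $\|\val-p_1\|^2=|V_0|\,p_1^2+|V_1|\,p_0^2=n\,p_0p_1(p_0+p_1)=n\,p_0p_1$, so the $i$-th summand equals $\lambda^i p_0p_1$.

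Plugging this into Lemma~\ref{lem:asympvardef} and summing the geometric series (which converges since $\lambda<1$),
\[
  \sigma^2(\Sigma\val(\RW_{G_{\lambda,p}}^t)) \;=\; p_0p_1 + 2\sum_{i=1}^\infty \lambda^i p_0p_1 \;=\; p_0p_1\left(1+\frac{2\lambda}{1-\lambda}\right) \;=\; p_0p_1\cdot\frac{1+\lambda}{1-\lambda},
\]
as claimed. There is no genuinely hard step here; the only points needing a little care are keeping the normalization conventions straight ($\1_V$ versus $\vec 1$) and verifying the eigenvector identity together with $G_{\lambda,p}\1_V=\1_V$, both of which are immediate from the block structure of $G_{\lambda,p}$ recorded in Lemma~\ref{lem:stickyexp}.
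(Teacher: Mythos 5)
Your proof is correct and follows essentially the same route as the paper's: invoke Lemma~\ref{lem:asympvardef}, observe that $\val-p_1$ is an eigenvector of $G_{\lambda,p}$ with eigenvalue $\lambda$, reduce $(\val-p_1)^\top G^i\val$ to $\lambda^i\|\val-p_1\|^2/n = \lambda^i p_0p_1$, and sum the geometric series. Your writeup just spells out two intermediate steps that the paper leaves implicit (peeling off the $p_1\1_V$ component and computing $\|\val-p_1\|^2 = np_0p_1$), but the argument is identical in substance.
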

\begin{proof}
  View $\val$ and $\val-p_1:V\rightarrow\bR$ as vectors in $\bR^V$. Then $\val-p_1\in\vec{1}^\perp$ is an eigenvalue of $G_{\lambda,p}=(1-\lambda)J_{V,V}+\lambda\begin{pmatrix}J_{V_0,V_0}&0\\0&J_{V_1,V_1}\end{pmatrix}$ with eigenvalue $\lambda$, so by Lemma~\ref{lem:asympvardef},
  \begin{align*}
    \sigma^2(\Sigma\val(\RW_{G_{\lambda,p}}^t))
    &= p_0p_1 + 2\sum_{i=1}^\infty \frac{1}{n}(\val-p_1)^\top G^i (\val-p_1) \\
    &= p_0p_1 + 2\sum_{i=1}^\infty \lambda^i \cdot \frac{\|\val-p_1\|^2}{n} \\
    &= p_0p_1 + 2 \cdot \frac{\lambda}{1-\lambda} \cdot p_0p_1 \\
    &= p_0p_1 \cdot \frac{1+\lambda}{1-\lambda}.
  \end{align*}
\end{proof}

\subsection{Definition of discrete normal families}
\label{sec:discnormdef}
To formally state our Berry-Esseen bound, we must define the family of discrete normal approximating distributions $\cN_{\sigma^2}^t$. We take an axiomatic approach here, where we define below the set of conditions that this family of distributions must satisfy for our Berry-Esseen proof. We then prove that such a family can be constructed using the sticky random walk.

\begin{definition}
  \label{def:discnorm}
  For $p=(p_0,p_1)$, $\sigma^2>0$, and $c=(c_1,c_2,c_3)\in\bR_+^3$, a \textbf{$(p,\sigma^2,c)$-discrete normal family} is a family\footnote{In this section, we typically treat $p$ and $c$ as fixed constants, so we exclude them in the notation $\cN_{\sigma^2}^t$ for readability.} $(\cN_{\sigma^2}^t)^{t\in\bN}$ of probability distributions over $\bZ$ such that the following conditions hold for all $t\in\bN$:
  \begin{enumerate}
  \item \label{it:dnexp} $\bE[\cN_{\sigma^2}^t] = p_1t$.
  \item \label{it:dnvar} $|\Var(\cN_{\sigma^2}^t)-\sigma^2t| \leq c_1 \cdot |\sigma^2-p_0p_1|$.
  \item \label{it:dnbern} $\cN_{\sigma^2}^1 = \text{Bern}(p)$.
  \item \label{it:dnsum} For all positive integers $\ell\leq t$ and $t_0,\dots,t_{\ell-1}$ such that $\sum_{i\in[\ell]}t_i=t$, then
    \begin{equation*}
      \disTV\left(\sum_{i\in[\ell]}\cN_{\sigma^2}^{t_i},\; \cN_{\sigma^2}^t\right) \leq \frac{c_2}{2} \cdot \frac{(\ell-1)\cdot|\sigma^2/p_0p_1-1|}{t},
    \end{equation*}
    where the variables $\cN_{\sigma^2}^{t_i}$ in the sum above are independent.
  \item \label{it:dntail} For all $a\geq 0$,
    \begin{equation*}
      \sum_{j\in\bZ:|j-p_1t|\geq a}|\Pr[\cN_{\sigma^2}^t=j]-\Pr[\text{Bin}(t,p)=j]| \leq c_3 \cdot |\sigma^2/p_0p_1-1| \cdot e^{-a^2/8t}.
    \end{equation*}
  \item \label{it:dnfourier} The characteristic function of $\cN_{\sigma^2}^t$ satisfies $|\bE[e^{-i\theta\cN_{\sigma^2}^t}]| \leq e^{-p_0p_1t\theta^2/20}$.
  \end{enumerate}
\end{definition}

Note that the constants $8$ and $20$ in the exponents above could be replaced with generic constants $c_4$ and $c_5$, but we leave them as explicit values to simplify notation.

In Definition~\ref{def:discnorm}, it is helpful to think of $p$ and $c$ as fixed constants. For given $p,c$, we want to allow $\sigma^2$ to vary within a neighborhood of $p_0p_1$, in order to obtain discrete normals parametrized by mean $p_1t$ and variance $\approx\sigma^2 t$. Proposition~\ref{prop:stickydn} below shows that such a family exists, and is given by $\Sigma\val(\RW_{G_{\lambda,p}}^t)$ for the sticky random walk $G_{\lambda,p}$ (see Section~\ref{sec:sticky}) with $\lambda=(\sigma^2-p_0p_1)/(\sigma^2+p_0p_1)$.

Although Definition~\ref{def:discnorm} provides a nonstandard definition of an integer-valued discrete normal, all of the conditions are natural. Condition~\ref{it:dnexp} and condition~\ref{it:dnvar} simply specify that $\cN_{\sigma^2}^t$ has expectation $p_1t$ and variance approximately $\sigma^2t$. Condition~\ref{it:dnbern} specifies that the $t=1$ case $\cN_{\sigma^2}^1$ takes values in $\{0,1\}$, as is for instance satisfied by a binomial distribution. Condition~\ref{it:dnsum} specifies that the sum of discrete normals is approximately a discrete normal, just as the sum of continuous normals is a normal distribution. Condition~\ref{it:dntail} specifies that the tails of $\cN_{\sigma^2}^t$ approach the tails of the binomial, which is natural as the binomial is a canonical discrete normal. Condition~\ref{it:dnfourier} specifies that the Fourier tails of the discrete normal decay rapidly, as is again the case for continuous normals.

\subsection{Construction of discrete normal families}
\label{sec:discnormconst}
This section describes our construction of discrete normal families using the sticky walk.

Note that while Definition~\ref{def:sticky} only defined the sticky random walk for $0\leq\lambda\leq 1$, the definition extends naturally to all $-\min\{p_0/p_1,p_1/p_0\}\leq\lambda\leq 1$. Many properties from the $\lambda\geq 0$ case extend to the $\lambda<0$ case. In particular, for all $-\min\{p_0/p_1,p_1/p_0\}\leq\lambda\leq 1$, the $\lambda$-sticky walk has spectral expansion $\lambda(G_{\lambda,p})=|\lambda|$, as can be seen from Lemma~\ref{lem:stickyexp} along with the fact that $G_{-\lambda,p}=2J-G_{\lambda,p}$. Our proof of Lemma~\ref{lem:stickyvar} also still holds for all $-\min\{p_0/p_1,p_1/p_0\}\leq\lambda\leq 1$.
% $\val-p_1\in\vec{1}^\perp$ is by definition an eigenvalue of $G_{\lambda,p}$ with eigenvalue $\lambda$, so by Lemma~\ref{lem:asympvardef},
% \begin{align*}
%   \sigma^2(\Sigma\val(\RW_{G_{\lambda,p}}^t))
%   &= p_0p_1 + 2\sum_{i=1}^\infty \frac{1}{n}(\val-p_1)^\top G^i (\val-p_1) \\
%   &= p_0p_1 + 2\sum_{i=1}^\infty \lambda^i \cdot \frac{\|\val-p_1\|^2}{n} \\
%   &= p_0p_1 + 2 \cdot \frac{\lambda}{1-\lambda} \cdot p_0p_1 \\
%   &= p_0p_1 \cdot \frac{1+\lambda}{1-\lambda}.
% \end{align*}

For $\lambda<0$, the $\lambda$-sticky walk can be thought of as a ``$(-\lambda)$-jumpy'' walk, as in this case the probabilities are skewed towards jumping to the opposite set $V_{1-b}=\val^{-1}(1-b)$ from $V_b=\val^{-1}(b)$. However, there is an unfortunate complication arising in this interpretation, which explains why we cannot let $\lambda$ descend all the way to $-1$ when $p\neq(1/2,1/2)$. Recall that $G_{\lambda,p}=(1-\lambda)J+\lambda\begin{pmatrix}J_{V_0,V_0}&0\\0&J_{V_1,V_1}\end{pmatrix}$, so that the $\lambda$-sticky random walk can be interpreted as going to a random vertex in $V$ with probability $1-\lambda$, and remaining in the current set $V_b$ with probability $\lambda$. This interpretation does not extend to $\lambda<0$. A more natural ``$\mu$-jumpy'' random walk, which is defined for all $0\leq\mu=-\lambda\leq 1$, would be given by the random walk matrix $(1-\mu)J+\mu\begin{pmatrix}0&J_{V_0,V_1}\\J_{V_1,V_0}&0\end{pmatrix}$, which corresponds to going to a random vertex in $V$ with probability $1-\mu$, and jumping to the opposite set $V_{1-b}$ from $V_b$ with probability $\mu$. Unfortunately, this notion of a jumpy random walk does not come from a regular graph when $p\neq(1/2,1/2)$, so we do not use it here.

The $\lambda$-sticky, $p$-biased random walk cannot be extended to $\lambda<-\min\{p_0/p_1,p_1/p_0\}$ without having negative values in the random walk matrix $G_{\lambda,p}$. For the purpose of our proofs, such negative probabilities do not seem to pose any fundamental issue, and we may in fact be able to consider $G_{\lambda,p}$ for all $-1\leq\lambda\leq 1$. However, to avoid confusion, we do not pursue this generalization.

\begin{proposition}
  \label{prop:stickydn}
  For every $p=(p_0,p_1)$ and every
  \begin{equation*}
    \frac{1-\min\{\frac{p_0}{p_1},\frac{p_1}{p_0},\frac{1}{100}\}}{1+\min\{\frac{p_0}{p_1},\frac{p_1}{p_0},\frac{1}{100}\}} \cdot p_0p_1 \leq \sigma^2 \leq \frac{1+\frac{1}{100}}{1-\frac{1}{100}} \cdot p_0p_1,
  \end{equation*}
  there exists a $(p,\sigma^2,c)$-discrete normal family $(\cN_{\sigma^2}^t)^{t\in\bN}$ with
  \begin{align*}
    c_1 &= 2 \\
    c_2 &= 2020 \\
    c_3 &= 2020.
  \end{align*}
  Specifically, such a family is given by
  \begin{align*}
    \cN_{\sigma^2}^t &= \Sigma\val(\RW_{G_{\lambda,p}}^t),
  \end{align*}
  where $G_{\lambda,p}$ denotes the $\lambda$-sticky, $p$-biased random walk with $\lambda=(\sigma^2-p_0p_1)/(\sigma^2+p_0p_1)$.
\end{proposition}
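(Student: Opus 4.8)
The plan is to verify each of the six axioms of Definition~\ref{def:discnorm} for the family $\cN_{\sigma^2}^t=\Sigma\val(\RW_{G_{\lambda,p}}^t)$, where $\lambda=(\sigma^2-p_0p_1)/(\sigma^2+p_0p_1)$. First I would check that the constraint on $\sigma^2$ in the hypothesis translates into $|\lambda|\leq\min\{p_0/p_1,p_1/p_0,1/100\}$, so that $G_{\lambda,p}$ is a genuine (possibly $\lambda<0$) sticky random walk, is a valid $|\lambda|$-spectral expander by Lemma~\ref{lem:stickyexp} and the surrounding discussion, and in particular has $\lambda(G_{\lambda,p})\le 1/100$, which is the hypothesis needed to invoke Theorem~\ref{thm:difftail}, Corollary~\ref{cor:difftailJ}, and Lemma~\ref{lem:smooth}. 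I would also record the algebraic identity $\sigma^2/p_0p_1-1 = 2\lambda/(1-\lambda)$ and hence $|\sigma^2-p_0p_1| \asymp |\lambda|\,p_0p_1$; this is what converts the $\lambda$-dependence in the cited results into the $|\sigma^2-p_0p_1|$ and $|\sigma^2/p_0p_1-1|$ dependence demanded by the axioms.

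\medskip\noindent
Axioms \ref{it:dnexp} and \ref{it:dnbern} are immediate: the stationary distribution of $G_{\lambda,p}$ is $p$, so each step contributes a $\mathrm{Bern}(p)$ label and $\bE[\Sigma\val(\RW_{G_{\lambda,p}}^t)]=p_1t$; and for $t=1$ the walk outputs a single uniform-then-labeled vertex, giving $\cN_{\sigma^2}^1=\mathrm{Bern}(p)$. Axiom~\ref{it:dnvar} follows from Lemma~\ref{lem:asympvarconv} applied to $G_{\lambda,p}$ together with Lemma~\ref{lem:stickyvar}, which gives $\sigma^2(\Sigma\val(\RW_{G_{\lambda,p}}^t))=p_0p_1\frac{1+\lambda}{1-\lambda}=\sigma^2$ exactly (by the choice of $\lambda$); then $|\Var(\cN_{\sigma^2}^t)-\sigma^2 t|\le \frac{2}{(1-\lambda)^2}\,\lambda\,p_0p_1 \le c_1|\sigma^2-p_0p_1|$ with $c_1=2$, using $|\lambda|\le 1/100$ to absorb the $(1-\lambda)^{-2}$ factor and the identity relating $\lambda$ to $\sigma^2-p_0p_1$. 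Axiom~\ref{it:dnfourier} is exactly Lemma~\ref{lem:smooth} applied to $G=G_{\lambda,p}$.

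\medskip\noindent
Axiom~\ref{it:dntail} is Corollary~\ref{cor:difftailJ} applied to the constant sequence $\cG=(G_{\lambda,p},\dots,G_{\lambda,p})$: it gives the sum over $|j-p_1t|\ge a$ of $|\Pr[\cN_{\sigma^2}^t=j]-\Pr[\mathrm{Bin}(t,p)=j]|$ bounded by $4000\,|\lambda|\,e^{-a^2/8t}$, and since $|\lambda|\le\frac{1}{1-\lambda}|\sigma^2/p_0p_1-1|\cdot\tfrac12 \cdot (1-\lambda)\le |\sigma^2/p_0p_1-1|$ up to a small constant, this is at most $c_3|\sigma^2/p_0p_1-1|e^{-a^2/8t}$ with $c_3=2020$. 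Axiom~\ref{it:dnsum} is the crux and the step I expect to be the main obstacle. Here I would use Theorem~\ref{thm:difftail} in its ``swap one step'' form: starting from a single length-$t$ walk on $G_{\lambda,p}$, I replace $\ell-1$ chosen steps (the ones at positions $t_0, t_0+t_1, \dots$) by steps on $J$, one at a time, so that the walk decouples into $\ell$ independent blocks of lengths $t_0,\dots,t_{\ell-1}$, whose label-sums are exactly independent copies $\cN_{\sigma^2}^{t_i}$. Each swap changes the distribution of the total label-sum in total variation by at most $4000\,\|G_{\lambda,p}-J\|/t = 4000\,|\lambda|/t$ (taking $c=0$ in Theorem~\ref{thm:difftail}, and using $\|G_{\lambda,p}-J\| = |\lambda|\|W-J\| = |\lambda|$ since $W$ acts as $J$ on each $\bR^{V_b}$ and as $0$ across). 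Summing over the $\ell-1$ swaps via the triangle inequality for $\disTV$ yields $\disTV(\sum_{i\in[\ell]}\cN_{\sigma^2}^{t_i},\cN_{\sigma^2}^t) \le 4000(\ell-1)|\lambda|/t$, and converting $|\lambda|$ to $\tfrac12|\sigma^2/p_0p_1-1|$ (again using $|\lambda|\le 1/100$ so $\tfrac{1}{1-\lambda}\le \tfrac{100}{99}$, with $4000\cdot\tfrac{100}{99}\le 2020/\!\cdot$, adjust constant to $c_2=2020$) gives the required bound. The subtlety to get right is that the $t_i$ need not be equal, so the intermediate sequences of graphs are not all $\lambda$-spectral on the swapped step — but they are on all other steps, which is exactly what Theorem~\ref{thm:difftail} requires, and $\|G_u'-G_u\|=|\lambda|$ is the same at every swap regardless of block sizes, so the telescoping bound is clean.
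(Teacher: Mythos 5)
Your proposal is correct and takes essentially the same approach as the paper: verify the six axioms for $\Sigma\val(\RW_{G_{\lambda,p}}^t)$ using Lemmas~\ref{lem:stickyexp}, \ref{lem:stickyvar}, \ref{lem:asympvarconv}, \ref{lem:smooth} and the swap-one-step argument from Theorem~\ref{thm:difftail}. One bookkeeping fix: Theorem~\ref{thm:difftail} with $c=0$ bounds the $\ell_1$ distance $\sum_j|\cdot|=2\disTV$, so a single swap costs at most $2000|\lambda|/t$ in total variation (not $4000|\lambda|/t$); carrying this factor of $2$ through recovers $c_2=2020$ as stated, whereas your version would force $c_2\approx 4040$.
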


The proof of Proposition~\ref{prop:stickydn} consists of straightforward applications of results that were stated above; it is provided in Appendix~\ref{app:beproofs}.

\section{Statement of Berry-Esseen bound}
\label{sec:resultstatement}
We are now ready to formally state our main Berry-Esseen result.

\begin{theorem}
  \label{thm:berryesseen}
  Fix $p=(p_0,p_1)$. For $\lambda\leq 1/100$, let $G=(V,E)$ be a $\lambda$-spectral expander with labeling $\val:V\rightarrow\{0,1\}$ that assigns each label $b\in\{0,1\}$ to $p_b$-fraction of the vertices. Let $\sigma^2=\sigma^2(\Sigma\val(\RW_G^t))$ denote the asymptotic variance. For some $c\in\bR_+^3$, let $(\cN_{\sigma^2}^t)^{t\in\bN}$ be a $(p,\sigma^2,c)$-discrete normal family. Then for all $t\in\bN$,
  \begin{equation*}
    \disTV\left(\Sigma\val(\RW_G^t),\; \cN_{\sigma^2}^t\right) \leq \frac{\lambda}{\sqrt{t}} \cdot (1+\log t)^{\eta_1 \log\log t + \eta_2},
  \end{equation*}
  where
  \begin{align*}
    \eta_1 &= 140 \\
    \eta_2 &= 140 + 3 \cdot \log\left(\frac{2^{28}+2^{10}c_1+2^{18}c_3}{(p_0p_1)^{7/2}} + 3c_2\right).
  \end{align*}
\end{theorem}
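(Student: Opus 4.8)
I would prove Theorem~\ref{thm:berryesseen} by induction on $t$, with the inductive step splitting a length-$t$ walk into $\ell\approx\sqrt t$ independent shorter walks. The core of the argument is a recursion on the quantity $E(t):=\sup_G \disTV(\Sigma\val(\RW_G^t),\cN^t_{\sigma^2})$, where the supremum is over all $\lambda$-spectral expanders $G$ with label weights $p$ and $\sigma^2=\sigma^2(\Sigma\val(\RW_G^t))$ (note the asymptotic variance depends only on $G$, not $t$, by Lemma~\ref{lem:asympvardef}). Given such a $G$ and a factorization $t=\ell\cdot s$ (I would take $\ell=\lceil\sqrt t\rceil$ and $s=\lceil t/\ell\rceil$, handling the non-divisibility by allowing unequal block lengths $t_0,\dots,t_{\ell-1}$), Corollary~\ref{cor:difftailJ}/Theorem~\ref{thm:difftail} lets me replace $\ell-1$ of the steps of $G$ by steps of $J$, which decomposes $\RW^t_\cG$ into $\ell$ independent walks on $G$ of lengths $t_i$; this costs $O(\ell\lambda/t)$ in total variation (summing the bound of Theorem~\ref{thm:difftail} over the $\ell-1$ replaced steps with $c=0$, since $\|G-J\|=\lambda$). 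Then I apply the inductive hypothesis to each of the $\ell$ blocks to replace $\Sigma\val(\RW_G^{t_i})$ by $\cN^{t_i}_{\sigma^2}$, costing $\sum_i E(t_i)\le \ell\cdot E(s)$ by subadditivity of total variation under independent sums. Finally, axiom~\ref{it:dnsum} of the discrete normal family recombines $\sum_i \cN^{t_i}_{\sigma^2}$ into $\cN^t_{\sigma^2}$ at cost $O(c_2\ell|\sigma^2/p_0p_1-1|/t)=O(c_2\ell\lambda/t)$, using Corollary~\ref{cor:expasympvar} to bound $|\sigma^2/p_0p_1-1|=O(\lambda)$. This yields a recursion of the shape
\[
E(t)\ \lesssim\ \sqrt t\cdot E(\sqrt t)\ +\ \frac{C(c)\cdot\lambda\cdot\sqrt t}{t},
\]
i.e., $E(t)/(\lambda/\sqrt t)\lesssim E(\sqrt t)/(\lambda/\sqrt[4]{t})\cdot(\text{something})+C(c)$, and iterating the substitution $t\mapsto\sqrt t$ about $\log\log t$ times (stopping at a constant-size base case, handled by $E(O(1))=O(1)$ trivially or via axiom~\ref{it:dnbern}) produces the $(1+\log t)^{\eta_1\log\log t+\eta_2}$ factor.

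\textbf{The subtlety with the approximating family.} A complication is that the recursion wants to apply the inductive hypothesis at a shorter length $s$, but the theorem fixes the target to be $\cN^t_{\sigma^2}$ with $\sigma^2=\sigma^2(\Sigma\val(\RW_G^t))$ — and this asymptotic variance is the same number for all lengths, which is exactly why the axiomatic framework is set up with a single family $(\cN^t_{\sigma^2})^{t\in\bN}$ indexed by $t$ at fixed $\sigma^2$. So there is no drift in $\sigma^2$ across the recursion; the blocks are compared against $\cN^{t_i}_{\sigma^2}$ with the \emph{same} $\sigma^2$, and axiom~\ref{it:dnsum} is precisely tailored to glue these back together. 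The one genuine mismatch is that each length-$t_i$ block is a walk on $G$, whose \emph{own} variance $\Var(\Sigma\val(\RW_G^{t_i}))$ differs from $\sigma^2 t_i$ — but only by $O(\lambda p_0 p_1)$ additively (Lemma~\ref{lem:asympvarconv}), which is a lower-order effect absorbed into the error; the inductive hypothesis as stated compares against $\cN^{t_i}_{\sigma^2}$ directly, so no variance-matching is needed at the inductive step itself. I would need Lemma~\ref{lem:smooth} and axiom~\ref{it:dnfourier} to propagate a Fourier-smoothness invariant through the induction (so the base case and the triangle-inequality steps remain controlled), and the tail axiom~\ref{it:dntail} together with Corollary~\ref{cor:difftailJ} to restrict attention to $O(\sqrt{t\log t})$ values near the mean where needed — this is where the $\log t$ rather than $\sqrt t$ factors in the exponent come from, since at each of the $\log\log t$ levels one pays a $\poly\log$ union-bound-type loss rather than a $\poly(t)$ loss.

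\textbf{Bookkeeping the constants.} The stated $\eta_1=140$ and $\eta_2=140+3\log(\cdots)$ come from tracking, level by level, (i) the multiplicative blow-up $\ell\le 2\sqrt t$ in front of $E(s)$, (ii) the additive $C(c)\lambda/\sqrt t$ term with $C(c)=O\big((p_0p_1)^{-7/2}(2^{28}+2^{10}c_1+2^{18}c_3)+c_2\big)$ assembled from the three error sources (Theorem~\ref{thm:difftail}: the $4000$; axiom~\ref{it:dnsum}: the $c_2$; axiom~\ref{it:dntail}: the $c_3$; plus factors of $p_0p_1$ from normalizing variances and from Gaussian-tail integrals), and (iii) the fact that iterating $t\mapsto\sqrt t$ takes $\log_2\log_2 t$ steps, each contributing a factor bounded by $(1+\log t)^{O(1)}$. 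The bound should be stated as a clean recursion lemma — "if $E(t)\le A\sqrt t\,E(\sqrt t)+B\lambda/\sqrt t$ and $E(O(1))\le B$ then $E(t)\le (\lambda/\sqrt t)(1+\log t)^{O(\log\log t)+O(\log B)+O(\log A)}$" — proved by unrolling, and then the body of the proof just verifies the hypotheses with explicit $A,B$. I expect the main obstacle to be the careful handling of (a) non-divisibility of $t$ by $\ell$ (unequal block lengths, ceilings), and (b) threading the tail/Fourier invariants through so that the per-level loss is genuinely polylogarithmic rather than polynomial in $t$ — getting the exponent of $\log t$ down to a constant $\eta_1$ independent of $t$ is the crux, and it relies essentially on the exponential tail decay in Theorem~\ref{thm:difftail} and axiom~\ref{it:dntail} (the $e^{-c^2/8t}$ and $e^{-a^2/8t}$ factors) letting one truncate to an $O(\sqrt{t\log t})$-width window at each level while only paying an $e^{-\Omega(\log t)}=t^{-\Omega(1)}$ correction that is dominated by the target bound.
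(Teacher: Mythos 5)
Your high-level scaffolding (induction on $t$, block decomposition into $\ell\approx\sqrt t$ independent walks via Theorem~\ref{thm:difftail}, gluing with axiom~\ref{it:dnsum}, tail truncation via the $e^{-c^2/8t}$ factors, explicit-constant bookkeeping at the end) matches the paper. But the \emph{inner} step of your recursion is wrong in a way that makes the whole argument fail, and the gap is not a bookkeeping issue.

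You replace each block $\Sigma\val(\RW_G^{t_i})$ by $\cN_{\sigma^2}^{t_i}$ at cost ``$\sum_i E(t_i)\le\ell\cdot E(s)$ by subadditivity of total variation under independent sums,'' yielding the recursion $E(t)\lesssim\sqrt t\,E(\sqrt t)+C\lambda/\sqrt t$. This recursion does not close. Plugging in the target $E(s)\approx\lambda/\sqrt s$ with $s=\sqrt t$ gives $\sqrt t\cdot E(\sqrt t)\approx\lambda\,t^{1/4}$, which \emph{grows} with $t$ and swamps both the target $\lambda/\sqrt t$ and the additive term. Equivalently, in the normalized form you write, the ``something'' multiplying $E(\sqrt t)/(\lambda/t^{1/4})$ is $t^{3/4}$, not polylogarithmic, and iterating $t\mapsto\sqrt t$ gives a product of powers of $t$, not a $(\log t)^{O(\log\log t)}$ factor. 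The naive triangle inequality $\disTV(\sum X_i,\sum Y_i)\le\sum\disTV(X_i,Y_i)$ captures none of the smoothing gain from convolving many independent nearly-Gaussian blocks, and that gain is precisely what the theorem is about.

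What the paper does instead — and what you need — is a Fourier-analytic inner step. The inductive hypothesis is applied not as a TV bound on each block but as a bound on the \emph{third moment} of the signed measure $h_G^u-n_G^u$: after truncating to an $O(\sqrt{u\log u})$ window, $\sum_j|j-p_1u|^3\,|(h_G^u)_j-(n_G^u)_j|\lesssim(\sqrt{u\log u})^3\|h_G^u-n_G^u\|_1\lesssim\lambda u\,(\log u)^{O(\log\log u)}$ (Lemma~\ref{lem:taylorbound}). Combined with $\hat h^u(0)=\hat n^u(0)$, matching first derivatives, and an $O(\lambda)$ second-derivative difference (eq.~(\ref{eq:vardif})), this gives the Taylor estimate $|\hat h_G^u(\theta)-\hat n_G^u(\theta)|\lesssim\lambda(|\theta|^2+u|\theta|^3)\,(\log u)^{O(\log\log u)}$. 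Multiplying $\ell\approx\sqrt t$ characteristic functions (Lemma~\ref{lem:fourierbound}) uses the Gaussian damping from Lemma~\ref{lem:smooth} and axiom~\ref{it:dnfourier} to confine the integral to $|\theta|\lesssim1/\sqrt t$; squaring and integrating (Lemma~\ref{lem:convl2}) then gives an $\ell_2$-bound $\lesssim t^{-1/4}\cdot(\lambda/\sqrt t)\cdot(\log t)^{O(\log\log t)}$, and Cauchy--Schwarz on the $O(\sqrt{t\log t})$-wide window converts this to the desired $\ell_1$ bound. It is exactly this $\ell_1\to$ third-moment $\to$ Fourier $\to\ell_2\to\ell_1$ pipeline that makes the recursion close with only polylogarithmic loss per level; your proposal mentions the Fourier lemmas only as an ``invariant to propagate,'' but they are in fact the entire mechanism that replaces the failing triangle-inequality step.
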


We give explicit constants $\eta_1,\eta_2$ for the sake of completeness, but we do not attempt to optimize these constants.

Recall that Corollary~\ref{cor:difftailJ} shown by \cite{golowich_pseudorandomness_2022-1} implies that $\disTV\left(\Sigma\val(\RW_G^t),\; \Sigma\val(\RW_J^t)\right)=O(\lambda)$, while condition~\ref{it:dntail} of Definition~\ref{def:discnorm} along with Corollary~\ref{cor:expasympvar} implies that $\disTV\left(\cN_{\sigma^2}^t,\; \Sigma\val(\RW_J^t)\right)=O(\lambda)$. Thus the results of Golowich and Vadhan~\cite{golowich_pseudorandomness_2022-1} imply that $\disTV\left(\Sigma\val(\RW_G^t),\; \cN_{\sigma^2}^t\right)=O(\lambda)$. Theorem~\ref{thm:berryesseen} improves this bound to $O(\lambda/t^{1/2-o(1)})$, at the cost of a worse dependence on the label weights $p$.

% TODO: point out what we can say about graphs giving close but not equal asymptotic variance, using Theorem~\ref{thm:difftail} applied to the sticky random walk

% TODO: point out some corollaries, for instance the bounds we get on majority function or correlation between parity and majority bits; or maybe not, as majority bound just recovers weaker version of Cohen et al. bound, and parity/majority correlation for general expanders still requires analyzing it for the sticky walk, for which it seems we still don't have tight bounds

\section{Proof overview}
\label{sec:proofoverview}
This section outlines the proof of Theorem~\ref{thm:berryesseen}. The full proof is given in Section~\ref{sec:beproof} below. At a high level, we follow the standard proof of the central limit theorem, in that we prove our bound using a Taylor approximation of the characteristic function of $\Sigma\val(\RW_G^t)$. However, we bound the third moment of this random variable using induction on the walk length $t$, which allows us to obtain the desired linear dependence on the spectral expansion $\lambda$ in our bound. The $(1+\log t)^{O(\log\log t)}=t^{o(1)}$ factor arises from a $\poly\log t$ loss at each step of the induction. It is an interesting question whether a tighter analysis could remove this factor.

We now present the inductive argument. Throughout this section, we take the label weighting $p$ to be a fixed constant. Our goal is to prove that for a sufficiently large constant $\eta$,
\begin{equation}
  \label{eq:beinf}
  \disTV\left(\Sigma\val(\RW_G^t),\; \cN_{\sigma^2}^t\right) \leq \frac{\lambda}{\sqrt{t}} \cdot (1+\log t)^{\eta\log\log t}.
\end{equation}
We will prove this inequality by induction over $t$. The base case of $t=1$ is immediate from the definition of $\cN_{\sigma^2}^1$. For the inductive step, assume that~(\ref{eq:beinf}) holds for walks of length $\sqrt{t}$ (assuming for simplicity that $t$ is a perfect square to avoid rounding issues).

To begin, we split the length-$t$ walk into $\sqrt{t}$ walks of length $\sqrt{t}$. The sum $\sum_{k\in[\sqrt{t}]}\Sigma\val(\RW_G^{\sqrt{t}})$ of $\sqrt{t}$ independent variables can be expressed as $\Sigma\val(\RW_{\cG'}^t)$ where $\cG'=(G'_1,\dots,G'_{t-1})$ consists of $\sqrt{t}-1$ evenly spaced copies of $J$ among $t-\sqrt{t}$ copies of $G$. Thus because $\|G-J\|=\lambda(G)\leq\lambda$, Theorem~\ref{thm:difftail} implies that
\begin{align*}
  \disTV\left(\Sigma\val(\RW_G^t),\; \sum_{k\in[\sqrt{t}]}\Sigma\val(\RW_G^{\sqrt{t}})\right) &= O\left(\frac{\lambda}{\sqrt{t}}\right).
\end{align*}
Similarly, Definition~\ref{def:discnorm} implies that
\begin{align*}
  \disTV\left(\cN_{\sigma^2}^t,\; \sum_{k\in[\sqrt{t}]}\cN_{\sigma^2}^{\sqrt{t}}\right) &=  O\left(\frac{\lambda}{\sqrt{t}}\right).
\end{align*}
Thus by the triangle inequality, to show~(\ref{eq:beinf}), it is sufficient to show that
\begin{align*}
  \disTV\left(\sum_{k\in[\sqrt{t}]}\Sigma\val(\RW_G^{\sqrt{t}}),\; \sum_{k\in[\sqrt{t}]}\cN_{\sigma^2}^{\sqrt{t}}\right) &\leq \frac{\lambda}{\sqrt{t}} \cdot (1+\log t)^{\eta\log\log t-\Omega(1)}.
\end{align*}
By the tail bounds in Theorem~\ref{thm:difftail} and Definition~\ref{def:discnorm}, almost all of the mass in the probability distributions above is contained within an interval of length roughly $O(\sqrt{t})$ around the mean $p_1t$, so by Cauchy-Schwartz it is in fact sufficient to show the $\ell_2$-bound
\begin{align}
  \label{eq:bel2inf}
  \disltwo\left(\sum_{k\in[\sqrt{t}]}\Sigma\val(\RW_G^{\sqrt{t}}),\; \sum_{k\in[\sqrt{t}]}\cN_{\sigma^2}^{\sqrt{t}}\right) &\leq \frac{\lambda}{t^{3/4}} \cdot (\log t)^{\eta\log\log t-\Omega(1)}.
\end{align}

We show~(\ref{eq:bel2inf}) by taking the Fourier transform, and then proving the $\ell_2$-bound using the inductive hypothesis. Specifically,
denote the probability distributions of $\Sigma\val(\RW_G^{\sqrt{t}})$ and $\cN_{\sigma^2}^{\sqrt{t}}$ by
\begin{align*}
  (h_G^{\sqrt{t}})_j &= \Pr[\Sigma\val(\RW_G^{\sqrt{t}}) = j] \\
  (n_G^{\sqrt{t}})_j &= \Pr[\cN_{\sigma^2}^{\sqrt{t}} = j],
\end{align*}
and denote their centered characteristic functions by
\begin{align*}
  \hat{h}_G^{\sqrt{t}}(\theta) &= \bE[e^{-i\theta(\Sigma\val(\RW_G^{\sqrt{t}})-p_1\sqrt{t})}] \\
  \hat{n}_G^{\sqrt{t}}(\theta) &= \bE[e^{-i\theta(\cN_{\sigma^2}^{\sqrt{t}}-p_1\sqrt{t})}].
\end{align*}
Then the left hand side of~(\ref{eq:bel2inf}) equals $\|(\hat{h}_G^{\sqrt{t}})^{\sqrt{t}}-(\hat{n}_G^{\sqrt{t}})^{\sqrt{t}}\|$ because the Fourier transform preserves $\ell_2$-norms. We will bound $\hat{h}_G^{\sqrt{t}}-\hat{n}_G^{\sqrt{t}}$ using a Taylor expansion around $\theta=0$. By definition,
\begin{align*}
  \label{eq:lowdersinf}
  \begin{split}
    (\hat{h}_G^{\sqrt{t}}-\hat{n}_G^{\sqrt{t}})(0) &= 0 \\
    {\dv{}{\theta}}(\hat{h}_G^{\sqrt{t}}-\hat{n}_G^{\sqrt{t}})(0) &= 0 \\
    \left|{\dv{^2}{\theta^2}}(\hat{h}_G^{\sqrt{t}}-\hat{n}_G^{\sqrt{t}})(0)\right|
    &= |\Var(\Sigma\val(\RW_G^{\sqrt{t}}))-\Var(\cN_{\sigma^2}^{\sqrt{t}})| = O(\lambda) \\
    \left|{\dv{^3}{\theta^3}}(\hat{h}_G^{\sqrt{t}}-\hat{n}_G^{\sqrt{t}})(\theta)\right| &\leq \sum_{j\in\bZ}|j-p_1\sqrt{t}|^3\cdot|(h_G^{\sqrt{t}})_j-(n_G^{\sqrt{t}})_j|,
  \end{split}
\end{align*}
where the bound in the third line above holds by Lemma~\ref{lem:asympvarconv}, Corollary~\ref{cor:expasympvar}, and Definition~\ref{def:discnorm}.
The key point in the proof is now to bound the third derivative above using the inductive hypothesis
\begin{align*}
  \|h_G^{\sqrt{t}}-n_G^{\sqrt{t}}\|_1 &\leq \frac{\lambda}{t^{1/4}} \cdot (1+\log\sqrt{t})^{\eta\log\log\sqrt{t}}
\end{align*}
Combining this inequality with the tail bounds in Theorem~\ref{thm:difftail} and Definition~\ref{def:discnorm}, which imply that $h_G^{\sqrt{t}}-n_G^{\sqrt{t}}$ is mostly supported within an interval of length $\tilde{O}(t^{1/4})$ around $p_1\sqrt{t}$, gives that
\begin{align*}
  \left|{\dv{^3}{\theta^3}}(\hat{h}_G^{\sqrt{t}}-\hat{n}_G^{\sqrt{t}})(\theta)\right|
  &\leq \tilde{O}(t^{1/4})^3 \cdot \|h_G^{\sqrt{t}}-n_G^{\sqrt{t}}\|_1 \\
  &\leq \lambda\sqrt{t} \cdot (1+\log t)^{\eta\log\log\sqrt{t}+O(1)}.
\end{align*}
Thus we have the Taylor approximation
\begin{align*}
  |\hat{h}_G^{\sqrt{t}}(\theta)-\hat{n}_G^{\sqrt{t}}(\theta)| &\leq \lambda\cdot(|\theta|^2+\sqrt{t}\cdot|\theta|^3)\cdot(1+\log t)^{\eta\log\log\sqrt{t}+O(1)}.
\end{align*}

Now by the Fourier tail bounds in Lemma~\ref{lem:smooth} and in Definition~\ref{def:discnorm}, both $|\hat{h}_G^{\sqrt{t}}(\theta)|$ and $|\hat{n}_G^{\sqrt{t}}(\theta)|$ are bounded above by $e^{-\Omega(\sqrt{t}\cdot\theta^2)}$. Therefore
\begin{align*}
  \left|\left(\hat{h}_G^{\sqrt{t}}(\theta)\right)^{\sqrt{t}}-\left(\hat{n}_G^{\sqrt{t}}(\theta)\right)^{\sqrt{t}}\right|
  &= \left|\sum_{k=0}^{\sqrt{t}-1}\left(\hat{n}_G^{\sqrt{t}}(\theta)\right)^k\cdot(\hat{h}_G^{\sqrt{t}}(\theta)-\hat{n}_G^{\sqrt{t}}(\theta))\cdot\left(\hat{h}_G^{t_{k'}}(\theta)\right)^{\sqrt{t}-k-1}\right| \\
  &\leq \sqrt{t}\cdot|\hat{h}_G^{\sqrt{t}}(\theta)-\hat{n}_G^{\sqrt{t}}(\theta)| \cdot e^{-\Omega(t\cdot\theta^2)} \\
  &\leq \frac{\lambda}{\sqrt{t}} \cdot (t|\theta|^2+t^{3/2}|\theta|^3) \cdot e^{-\Omega(t\theta^2)} \cdot (1+\log t)^{\eta\log\log\sqrt{t}+O(1)}.
\end{align*}

Squaring the bound above and integrating over $-\pi<\theta\leq\pi$ then gives the desired $\ell_2$-bound (\ref{eq:bel2inf}). Intuitively, for $|\theta|\leq \tilde{O}(1/\sqrt{t})$ the right hand side above is bounded by $\lambda/\sqrt{t} \cdot (1+\log t)^{\eta\log\log\sqrt{t}+O(1)}$, while for $|\theta|\gg 1/\sqrt{t}$ the right hand side above decays rapidly. Therefore the $\ell_2$-norm of this function is bounded by
\begin{align*}
  \left\|\left(\hat{h}_G^{\sqrt{t}}\right)^{\sqrt{t}}-\left(\hat{n}_G^{\sqrt{t}}\right)^{\sqrt{t}}\right\|
  &\leq \sqrt{\tilde{O}\left(\frac{1}{\sqrt{t}}\right) \cdot \left(\frac{\lambda}{\sqrt{t}} \cdot (1+\log t)^{\eta\log\log\sqrt{t}+O(1)}\right)^2} \\
  &\leq \frac{\lambda}{t^{3/4}} \cdot (1+\log t)^{\eta\log\log t-\Omega(1)},
\end{align*}
where the final inequality above assumes that $\eta$ is sufficiently large so that $\eta\log 2$ dominates the $O(1)$ constant in the exponent.
Then~(\ref{eq:bel2inf}) follows because the left hand side above equals the left hand side of~(\ref{eq:bel2inf}), as the Fourier transform interchanges convolution and multiplication, and preserves $\ell_2$-norms. Thus~(\ref{eq:beinf}) holds, completing the inductive step.

\section{Proof of Berry-Esseen bound}
\label{sec:beproof}
This section presents the full proof of Theorem~\ref{thm:berryesseen}.

  We first introduce some notation.
  Fix a graph $G$, and set $\sigma^2=\sigma^2(\Sigma\val(\RW_G^t))$ to be the asymptotic variance. For every $t\in\bN$, define vectors $h_G^t,n_G^t\in[0,1]^{\bZ}\subseteq\bR^{\bZ}$ by
  \begin{align*}
    (h_G^t)_j &= \Pr[\Sigma\val(\RW_G^t) = j] \\
    % (n_{\sigma^2}^t)_j &= \Pr[\cN_{\sigma^2}^t = j] \\
    (n_G^t)_j &= \Pr[\cN_{\sigma^2}^t = j].
  \end{align*}
  That is, $h_G^t$ is the probability distribution of $\Sigma\val(\RW_G^t)$, and $n_G^t$ is the probability distribution of the discrete normal $\cN_{\sigma^2}^t$ with the same mean and asymptotic variance as $\Sigma\val(\RW_G^t)$. For a given $t$, Lemma~\ref{lem:asympvarconv}, Corollary~\ref{cor:expasympvar}, and condition~\ref{it:dnvar} of Definition~\ref{def:discnorm} imply that
  \begin{equation}
    \label{eq:vardif}
    \left|\Var(\Sigma\val(\RW_G^t)))-\Var(\cN_{\sigma^2}^t)\right| \leq \left(\frac{2}{(1-\lambda)^2}+\frac{2c_1}{1-\lambda}\right)\cdot\lambda\cdot p_0p_1.
  \end{equation}

  For a probability distribution $f^t\in[0,1]^{\bZ}$ with mean $\sum_{j\in\bZ}jf_j^t = p_1t$ (e.g.~$f^t=h_G^t$ or $n_G^t$), in this proof we define the \textbf{centered Fourier transform}
  \begin{equation*}
    \hat{f}^t(\theta) = \sum_{j\in\bZ} e^{-i\theta(j-p_1t)}f_j^t.
  \end{equation*}
  The centered Fourier transform is by definition equal to $e^{i\theta p_1t}$ times the ordinary Fourier transform. Therefore in particular, the absolute values of the centered and ordinary Fourier transforms agree, so the centered Fourier transform preserves $\ell_2$-norms like the ordinary Fourier transform. Furthermore, recall that the sum of independent random variables with distributions $f^{t_1}$ and $f^{t_2}$ has distribution given by the convolution $f^{t_1}*f^{t_2}$. As with the ordinary Fourier transform, the centered Fourier transform changes convolution into multiplication, that is, $\widehat{f^{t_1}*f^{t_2}}=\hat{f}^{t_1}\cdot\hat{f}^{t_2}$. Note that we cannot instead center the distribution $f^t$ and then take the ordinary Fourier transform because if $p_1t\notin\bZ$, then the centered version of $f^t$ no longer takes values in $\bZ$.

\begin{proof}[Proof of Theorem~\ref{thm:berryesseen}]
  We will prove by induction that for all $t\in\bN$, we have the desired inequality
  \begin{equation}
    \label{eq:beinduct}
    \|h_G^t-n_G^t\|_1 \leq \frac{\lambda}{\sqrt{t}} \cdot (1+\log t)^{\eta_1 \log\log t+\eta_2}.
  \end{equation}
  For the base case of our induction, when $t=1$, then $\Sigma\val(\RW_G^1)$ is simply a Bernoulli distribution with parameter $p$, and by condition~\ref{it:dnbern} of Definition~\ref{def:discnorm}, $\cN_{\sigma^2}^1$ is also a Bernoulli with parameter $p$. Thus $h_G^1=n_G^1$, so~(\ref{eq:beinduct}) holds for $t=1$.

  For the inductive step, fix $t\geq 1$. We will prove that the inequality~(\ref{eq:beinduct}) holds for $t$ assuming that it holds for all $u<t$. To begin, we reduce our problem of studying a random walk on $G$ of length $t$ to studying $\ell\approx\sqrt{t}$ independent random walks on $G$ of length approximately $\sqrt{t}$. We will then apply the inductive hypothesis on these shorter random walks. Formally, choose $\ell\in\bN$ and $t_0,\dots,t_{\ell-1}\in\bN$ such that $\sum_{k\in[\ell]}t_k=t$, and such that $\ell$ and all $t_k$ differ from $\sqrt{t}$ by less than $1$. In the case that $t=2$, we specify that $\ell=2$ and $t_0=t_1=1$, so that for all $t\geq 2$ we have all $t_k\leq\min\{\sqrt{t}+1,t-1\}$.

  Let $S=\{j\in\bZ:|j-p_1t|\leq 8\sqrt{t\log t}\}$. The distributions $h_G^t$ and $n_G^t$ are mostly supported on $S$, so the main point of our induction is to bound the $\ell_1$-norm of the restriction $(h_G^t-n_G^t)_S$. We will separately apply tail bounds to bound $\|(h_G^t-n_G^t)_{\bZ\setminus S}\|_1$. Specifically, we begin with the following lemma.
  
  \begin{lemma}
    \label{lem:startind}
    \begin{align*}
      % \label{eq:startind}
      \|h_G^t-n_G^t\|_1
      &\leq \left(8000 + \frac{2(c_2+c_3)}{1-\lambda}\right) \cdot \frac{\lambda}{\sqrt{t}} + \left\|\left(\bigast_{k\in[\ell]}h_G^{t_k}-\bigast_{k\in[\ell]} n_G^{t_k}\right)_S\right\|_1.
    \end{align*}
  \end{lemma}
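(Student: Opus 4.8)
The plan is to prove Lemma~\ref{lem:startind} by a triangle-inequality argument that splits the total variation distance $\|h_G^t - n_G^t\|_1$ into three pieces: (i) the error from breaking the length-$t$ walk on $G$ into $\ell$ independent walks of lengths $t_0,\dots,t_{\ell-1}$; (ii) the analogous error for the discrete normal family; and (iii) the remaining ``genuine'' discrepancy $\|\bigast_{k}h_G^{t_k} - \bigast_{k} n_G^{t_k}\|_1$, of which we only keep the restriction to $S$ and handle the tail $\bZ\setminus S$ separately. Writing $\cG'$ for the sequence consisting of $\ell-1$ evenly spaced copies of $J$ interspersed among the $t-\ell+1$ copies of $G$, the distribution of $\Sigma\val(\RW_{\cG'}^t)$ is exactly $\bigast_{k\in[\ell]} h_G^{t_k}$. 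By Corollary~\ref{cor:difftailJ} applied with $c=0$ (or by $\ell-1$ applications of Theorem~\ref{thm:difftail}, since $\|G-J\|=\lambda(G)\le\lambda$, one for each of the $\ell-1$ swapped steps), we get $\|h_G^t - \bigast_{k}h_G^{t_k}\|_1 \le 4000(\ell-1)\lambda/t \le 4000\lambda/\sqrt{t}$, using $\ell-1 < \sqrt{t}$. Similarly, condition~\ref{it:dnsum} of Definition~\ref{def:discnorm} gives $\|n_G^t - \bigast_{k}n_G^{t_k}\|_1 \le c_2 (\ell-1)|\sigma^2/p_0p_1 - 1|/t$, and by Corollary~\ref{cor:expasympvar} we have $|\sigma^2/p_0p_1 - 1| \le 2\lambda/(1-\lambda)$, so this piece is at most $c_2\lambda/((1-\lambda)\sqrt{t})$.

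For the tail contribution $\|(\bigast_{k}h_G^{t_k} - \bigast_{k}n_G^{t_k})_{\bZ\setminus S}\|_1$, I would again pass back through the triangle inequality against the binomial. Both $\bigast_{k}h_G^{t_k}$ and $\bigast_{k}n_G^{t_k}$ have mean $p_1 t$. The convolution $\bigast_k h_G^{t_k}$ is $\Sigma\val(\RW_{\cG'}^t)$, so Corollary~\ref{cor:difftailJ} with $c = 8\sqrt{t\log t}$ controls $\sum_{|j-p_1t|\ge c}|\Pr[\Sigma\val(\RW_{\cG'}^t)=j] - \Pr[\Bin(t,p_1)=j]|$ by $4000\lambda e^{-c^2/8t} = 4000\lambda\cdot e^{-8\log t} = 4000\lambda/t^8$. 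On the discrete-normal side, condition~\ref{it:dntail} does not literally apply to the convolution $\bigast_k n_G^{t_k}$, but I can first move from $\bigast_k n_G^{t_k}$ to $n_G^t$ (paying the $c_2$ term already accounted for, or folding it in cleanly), and then apply condition~\ref{it:dntail} to $n_G^t$ directly with $a = 8\sqrt{t\log t}$, getting $c_3|\sigma^2/p_0p_1-1|e^{-a^2/8t}\le 2c_3\lambda/((1-\lambda)t^8)$. Combining these via the triangle inequality — and noting the $\Bin(t,p_1)$ tail itself outside $S$ is negligibly small, e.g.\ by a Chernoff bound, at the scale of $\lambda/\sqrt t$ — bounds $\|(\bigast_{k}h_G^{t_k} - \bigast_{k}n_G^{t_k})_{\bZ\setminus S}\|_1$ by something like $2(c_2+c_3)\lambda/((1-\lambda)\sqrt{t})$ with room to spare (the $1/t^8$ decay is far stronger than $1/\sqrt t$). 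Adding up all pieces and absorbing constants into the stated $8000$ and $2(c_2+c_3)/(1-\lambda)$ coefficients gives the lemma.

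The main obstacle, and the place requiring the most care, is the bookkeeping in the tail estimate (iii): condition~\ref{it:dntail} is a statement about $\cN_{\sigma^2}^t = n_G^t$ itself, not about the $\ell$-fold convolution $\bigast_k n_G^{t_k}$, so one must route through $n_G^t$ without double-counting the $c_2$ term, and one must make sure the various $e^{-c^2/8t}$ tail bounds and the raw binomial tail are all dominated by the target $\lambda/\sqrt{t}$ scale uniformly in $t$ (this is where the specific choice $S = \{|j-p_1t|\le 8\sqrt{t\log t}\}$, giving decay $t^{-8}$, is used). None of this is deep, but getting the constants to land exactly at $8000$ and $2(c_2+c_3)/(1-\lambda)$ — rather than merely $O(1)$ and $O(c_2+c_3)$ — is the only delicate part; everything else is a direct invocation of Corollary~\ref{cor:difftailJ} and Definition~\ref{def:discnorm}.
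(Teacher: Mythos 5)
Your proposal follows the same strategy as the paper (triangle inequality through the $\ell$-fold convolutions, tail controlled via the binomial), and it is substantively correct, but the paper's organization is cleaner in a way you are right to flag. Rather than first passing to $\bigast_k h_G^{t_k}$ and $\bigast_k n_G^{t_k}$ and only then splitting the \emph{middle} term into $S$ versus $\bZ\setminus S$, the paper performs the $S$/$(\bZ\setminus S)$ split at the outermost level on $h_G^t-n_G^t$ itself: on $S$ it uses the five-term triangle chain $h_G^t\to\bigast_k h_G^{t_k}\to\bigast_k n_G^{t_k}\to n_G^t$, and on $\bZ\setminus S$ it inserts $b^t$ directly between $h_G^t$ and $n_G^t$ (never touching the convolutions). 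This means condition~\ref{it:dntail} is applied to $n_G^t$ exactly once, and the $c_2$-term from condition~\ref{it:dnsum} is paid exactly once, whereas your routing through $n_G^t$ to reach $\bigast_k n_G^{t_k}$ on the tail would charge the $c_2$ bound a second time (landing at roughly $4c_2$ rather than $2c_2$), as you yourself anticipate. Two small slips worth noting: your estimate for piece (ii) drops a factor of $2$ (you should get $2c_2\lambda/((1-\lambda)\sqrt t)$, not $c_2\lambda/((1-\lambda)\sqrt t)$), and the remark that the raw $\Bin(t,p_1)$ tail outside $S$ is ``negligibly small $\dots$ at the scale of $\lambda/\sqrt t$'' is false as stated (the Chernoff tail is roughly $t^{-32}$ with no $\lambda$ factor, so it is not dominated by $\lambda/\sqrt t$ when $\lambda$ is tiny). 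Fortunately that remark is also unnecessary: in the triangle inequality on $\bZ\setminus S$, $b^t$ appears only as an intermediate and cancels, so the bare binomial tail probability never needs to be bounded.
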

  \begin{proof}
    Let $b^t\in[0,1]^{\bZ}$ denote the binomial distribution with parameters $t,p$. Then
    \begin{align*}
      \|h_G^t-n_G^t\|_1
      &\leq \left\|\left(h_G^t-\bigast_{k\in[\ell]} h_G^{t_k}\right)_S\right\|_1 + \left\|\left(\bigast_{k\in[\ell]} h_G^{t_k}-\bigast_{k\in[\ell]} n_G^{t_k}\right)_S\right\|_1 + \left\|\left(\bigast_{k\in[\ell]} n_G^{t_k}-n_G^t\right)_S\right\|_1 \\
      &\hspace{.5cm}+ \|(h_G^t-b^t)_{\bZ\setminus S}\|_1 + \|(b^t-n_G^t)_{\bZ\setminus S}\|_1.
    \end{align*}
    By definition $\bigast_{k\in[\ell]} h_G^{t_k}$ is the distribution of the sum $\sum_{k\in[\ell]}\Sigma\val(\RW_G^{t_k})$ of $\ell$ independent variables, which can be expressed as $\Sigma\val(\RW_{\cG'}^t)$ where $\cG'=(G'_1,\dots,G'_{t-1})$ consists of $\ell-1$ copies of $J$ and $t-\ell$ copies of $G$. Thus because $\|G-J\|=\lambda(G)\leq\lambda$, Theorem~\ref{thm:difftail} implies that
    \begin{align*}
      \left\|h_G^t-\bigast_{k\in[\ell]} h_G^{t_k}\right\|_1
      &= 2 \cdot \disTV(\Sigma\val(\RW_G^t),\; \Sigma\val(\RW_{\cG'}^t)) \\
      &\leq 4000 \cdot \frac{\lambda}{t} \cdot (\ell-1) \\
      &\leq 4000 \cdot \frac{\lambda}{\sqrt{t}}.
    \end{align*}
    Similarly, $\bigast_{k\in[\ell]} n_G^{t_k}$ is the distribution of the sum $\sum_{k\in[\ell]}\cN_{\sigma^2}^{t_k}$ of $\ell$ independent variables, so by condition~\ref{it:dnsum} of Definition~\ref{def:discnorm} along with Corollary~\ref{cor:expasympvar},
    \begin{align*}
      \left\|\bigast_{k\in[\ell]} n_G^{t_k}-n_G^t\right\|_1
      &= 2 \cdot \disTV\left(\sum_{k\in[\ell]}\cN_{\sigma^2}^{t_k},\; \cN_{\sigma^2}^t\right) \\
      &\leq c_2 \cdot \frac{(\ell-1)\cdot|\sigma^2/p_0p_1-1|}{t} \\
      &\leq c_2 \cdot \frac{\sqrt{t} \cdot \frac{2}{1-\lambda}\cdot\lambda}{t} \\
      &= \frac{2c_2}{1-\lambda} \cdot \frac{\lambda}{\sqrt{t}}.
    \end{align*}
    Furthermore, Theorem~\ref{thm:difftail} implies that
    \begin{align*}
      \|(h_G^t-b^t)_{\bZ\setminus S}\|_1
      &\leq 4000 \cdot \lambda \cdot \frac{1}{t^8},
    \end{align*}
    and condition~\ref{it:dntail} of Definition~\ref{def:discnorm} with Corollary~\ref{cor:expasympvar} implies that
    \begin{align*}
      \|(b^t-n_G^t)_{\bZ\setminus S}\|_1
      &\leq c_3 \cdot \frac{2\lambda}{1-\lambda} \cdot \frac{1}{t^8}.
    \end{align*}
    Combining the above inequalities gives the desired inequality
    \begin{align*}
      % \label{eq:startind}
      \|h_G^t-n_G^t\|_1
      &\leq \left(8000 + \frac{2(c_2+c_3)}{1-\lambda}\right) \cdot \frac{\lambda}{\sqrt{t}} + \left\|\left(\bigast_{k\in[\ell]}h_G^{t_k}-\bigast_{k\in[\ell]} n_G^{t_k}\right)_S\right\|_1.
    \end{align*}
  \end{proof}
  
  Thus our problem is reduced to bounding $\left\|\left(\bigast_{k\in[\ell]} h_G^{t_k}-\bigast_{k\in[\ell]} n_G^{t_k}\right)_S\right\|_1$. For this purpose, we will bound the $\ell_2$-norm of $\bigast_{k\in[\ell]} h_G^{t_k}-\bigast_{k\in[\ell]} n_G^{t_k}$ by bounding its centered Fourier coefficients. We will then obtain an $\ell_1$-bound from the $\ell_2$-bound by applying the Cauchy-Schwartz inequality.

  By definition, the centered Fourier transform of $\bigast_{k\in[\ell]} h_G^{t_k}-\bigast_{k\in[\ell]} n_G^{t_k}$ is given by $\prod_{k\in\ell}\hat{h}_G^{t_k}(\theta)-\prod_{k\in[\ell]}\hat{n}_G^{t_k}(\theta)$. Thus we will first bound the difference between $\hat{h}_G^{t_k}(\theta)$ and $\hat{n}_G^{t_k}(\theta)$ in the following lemma by applying the inductive hypothesis, which is the key technical step in our proof.
  
  \begin{lemma}
    \label{lem:taylorbound}
    For every $1\leq u\leq t-1$, we have
    \begin{align*}
      % \label{eq:taylorbound}
      |\hat{h}_G^u(\theta)-\hat{n}_G^u(\theta)|
      &\leq \lambda \cdot \left(\frac{C_1}{2} \cdot |\theta|^2 + \frac{C_2}{6} \cdot u \cdot |\theta|^3\right) \cdot (1+\log u)^{\eta_1 \log\log u+\eta_2+3/2}
    \end{align*}
    for constants
    \begin{align*}
      C_1 &= \left(\frac{2}{(1-\lambda)^2}+\frac{2c_1}{1-\lambda}\right) \cdot p_0p_1 \\
      C_2 &= 2^9 + 2^8\left(4000+\frac{2c_3}{1-\lambda}\right).
    \end{align*}
  \end{lemma}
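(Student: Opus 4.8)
The plan is to Taylor-expand $g(\theta) := \hat{h}_G^u(\theta) - \hat{n}_G^u(\theta)$ about $\theta = 0$ to second order and control the cubic remainder using the inductive hypothesis~(\ref{eq:beinduct}) for walks of length $u < t$. If $u=1$ then $h_G^1 = n_G^1 = \Bern(p)$ and the claim is trivial, so I assume $u \ge 2$ (which keeps the radius $\sqrt{u\log u}$ and the $\log\log u$ below meaningful).

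First I would record the low-order coefficients. Both $h_G^u$ and $n_G^u$ are probability distributions, so $g(0)=0$; both $\Sigma\val(\RW_G^u)$ and $\cN_{\sigma^2}^u$ have mean $p_1u$ (using regularity of $G$, so the uniform distribution is stationary, and condition~\ref{it:dnexp}), so differentiating each centered Fourier transform gives $g'(0)=0$; differentiating once more, $g''(0) = \Var(\cN_{\sigma^2}^u) - \Var(\Sigma\val(\RW_G^u))$, hence $|g''(0)| \le C_1\lambda$ by the variance estimate~(\ref{eq:vardif}), which holds verbatim with $u$ in place of $t$ since the asymptotic variance $\sigma^2$ does not depend on the walk length.

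The heart of the argument is a bound $|g'''(\theta)| \le C_2\,u\,\lambda\,(1+\log u)^{\eta_1\log\log u+\eta_2+3/2}$ uniform in $\theta$. Differentiating the centered Fourier transform three times gives $|g'''(\theta)| \le \sum_{j\in\bZ}|j-p_1u|^3\,|(h_G^u)_j-(n_G^u)_j|$, so it suffices to bound this cubically weighted $\ell_1$-norm. I would split at radius $a_0 = 8\sqrt{u\log u}$. On $\{|j-p_1u|\le a_0\}$ I bound $|j-p_1u|^3 \le a_0^3 = 2^9(u\log u)^{3/2}$ and apply the inductive hypothesis $\|h_G^u-n_G^u\|_1 \le \frac{\lambda}{\sqrt u}(1+\log u)^{\eta_1\log\log u+\eta_2}$ (valid since $u\le t-1$), giving a contribution at most $2^9\,u\,\lambda\,(1+\log u)^{\eta_1\log\log u+\eta_2+3/2}$ after absorbing $(\log u)^{3/2}$ into the polylog factor. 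On $\{|j-p_1u|>a_0\}$ I route through the binomial $b^u = \Bin(u,p)$: by Corollary~\ref{cor:difftailJ}, and by condition~\ref{it:dntail} together with Corollary~\ref{cor:expasympvar}, the unweighted tail masses $\sum_{|j-p_1u|\ge s}|(h_G^u)_j-b^u_j|$ and $\sum_{|j-p_1u|\ge s}|b^u_j-(n_G^u)_j|$ are each $\le (\text{const})\,\lambda\,e^{-s^2/8u}$; I then convert these to a bound on the $|j-p_1u|^3$-weighted tail sum, either by a dyadic decomposition over the shells $2^i a_0 \le |j-p_1u| < 2^{i+1}a_0$ or by the layer-cake identity $\sum_{|j-m|>a_0}|j-m|^3 w_j = a_0^3\,W(a_0) + \int_{a_0}^\infty 3s^2\,W(s)\,ds$ with $W$ the tail function. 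Since $e^{-a_0^2/8u} = u^{-8}$, the cubic weight $a_0^3$ is killed and this contribution is polynomially small in $u$, hence comfortably below $2^8(4000+\tfrac{2c_3}{1-\lambda})\,u\,\lambda$. Summing the two pieces yields the uniform bound on $|g'''|$ with $C_2 = 2^9 + 2^8(4000+\tfrac{2c_3}{1-\lambda})$.

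Finally I apply Taylor's theorem with the integral form of the remainder (so it applies to the complex-valued $g$): $|g(\theta)| \le \frac{|g''(0)|}{2}|\theta|^2 + \frac16\big(\sup_{|\xi|\le|\theta|}|g'''(\xi)|\big)|\theta|^3$. Substituting $|g''(0)| \le C_1\lambda$ and the uniform bound on $|g'''|$, and using $(1+\log u)^{\eta_1\log\log u+\eta_2+3/2}\ge 1$ to attach the polylog factor to the $|\theta|^2$ term as well, gives exactly the claimed inequality. The main obstacle is the tail estimate for $|g'''|$: one must upgrade the unweighted exponential tail bounds of Corollary~\ref{cor:difftailJ} and condition~\ref{it:dntail} to a bound on the cubically weighted tail sum, and verify that the choice $a_0 = \Theta(\sqrt{u\log u})$ makes the $e^{-a_0^2/8u}$ savings beat $a_0^3$; everything else is a direct substitution of the inductive hypothesis and the earlier variance and regularity estimates.
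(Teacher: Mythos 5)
Your proposal matches the paper's proof step for step: split at radius $8\sqrt{u\log u}$, bound the near part by $a_0^3$ times the inductive $\ell_1$-bound, push the far part through the binomial using Corollary~\ref{cor:difftailJ} and condition~\ref{it:dntail} with a shell/layer-cake integral that beats $a_0^3$ because $e^{-a_0^2/8u}=u^{-8}$, then apply third-order Taylor with integral remainder. The only cosmetic difference is that your far-tail bound carries an extra factor of $u$ (which you correctly note is still comfortably within $C_2$), whereas the paper's integral calculation shows that piece is actually $O(\lambda)$ with no $u$; both give the stated constants.
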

  \begin{proof}
    If $u=1$, then the desired result follows directly from the fact that $h_G^1=n_G^1$ by condition~\ref{it:dnbern} of Definition~\ref{def:discnorm}. Thus we may assume that $u\geq 2$.
    
    Recall that for $u\in\bN$, if $f^u\in[0,1]^{\bZ}\subseteq\bR^{\bZ}$ is the probability distribution for a random variable $F^u$ with mean $\bE[F^u]=p_1u$, then the centered Fourier transform $\hat{f}^u$ satisfies
    \begin{align*}
      \hat{f}^u(0) &= 1 \\
      {\dv{\hat{f}^u}{\theta}}(0) &= -i(\bE[F^u]-p_1u) = 0 \\
      {\dv{^2\hat{f}^u}{\theta^2}}(0) &= -\bE[(F^u-p_1u)^2] = -\Var(F^u) \\
      {\dv{^3\hat{f}^u}{\theta^3}}(\theta) &= \sum_{j\in\bZ}ie^{-i\theta(j-p_1t)}\cdot(j-p_1u)^3f_j^u.
    \end{align*}
    Thus for every $2\leq u\leq t-1$, we have
    \begin{align}
      \label{eq:lowders}
      \begin{split}
        (\hat{h}_G^u-\hat{n}_G^u)(0) &= 0 \\
        {\dv{}{\theta}}(\hat{h}_G^u-\hat{n}_G^u)(0) &= 0 \\
        \left|{\dv{^2}{\theta^2}}(\hat{h}_G^u-\hat{n}_G^u)(0)\right|
        &= |\Var(\Sigma\val(\RW_G^u))-\Var(\cN_{\sigma^2}^u)| \leq C_1 \cdot \lambda,
        % \left|{\dv{^3}{\theta^3}}(\hat{h}_G^u-\hat{n}_G^u)(\theta)\right|
        % &\leq \sum_{j\in\bZ}|j-p_1u|^3\cdot|(h_G^u)_j-(n_G^u)_j|,
      \end{split}
    \end{align}
    where the final equality above is given by~(\ref{eq:vardif}). The key step in the proof is now to bound the third derivative ${\dv{^3}{\theta^3}}(\hat{h}_G^u-\hat{n}_G^u)(\theta)$ using the inductive hypothesis. To begin, by the above expression for the third derivative,
    \begin{align}
      \label{eq:thirdderbound}
      \begin{split}
        \hspace{1em}&\hspace{-1em} \left|{\dv{^3}{\theta^3}}(\hat{h}_G^u-\hat{n}_G^u)(\theta)\right| \\
        &\leq \sum_{j\in\bZ}|j-p_1u|^3\cdot|(h_G^u)_j-(n_G^u)_j| \\
        &\leq (8\sqrt{u\log u})^3\cdot\|h_G^u-n_G^u\|_1 + \sum_{j\in\bZ:|j-p_1u|>8\sqrt{u\log u}}|j-p_1u|^3\cdot|(h_G^u)_j-(n_G^u)_j|.
      \end{split}
    \end{align}
    By the inductive hypothesis,
    \begin{align*}
      (8\sqrt{u\log u})^3\cdot\|h_G^u-n_G^u\|_1
      &\leq (8\sqrt{u\log u})^3 \cdot \frac{\lambda}{\sqrt{u}} \cdot (1+\log u)^{\eta_1 \log\log u+\eta_2} \\
      &= 2^9 \cdot \lambda u \cdot (1+\log u)^{\eta_1 \log\log u+\eta_2+3/2}.
    \end{align*}
    To bound the second term on the right hand side of~(\ref{eq:thirdderbound}), let $b^u\in[0,1]^{\bZ}$ denote the binomial distribution with parameters $u,p$. Then by Theorem~\ref{thm:difftail} and by condition~\ref{it:dntail} of Definition~\ref{def:discnorm} along with Corollary~\ref{cor:expasympvar},
    \begin{align*}
      \hspace{1em}&\hspace{-1em} \sum_{j\in\bZ:|j-p_1u|>8\sqrt{u\log u}}|j-p_1u|^3\cdot|(h_G^u)_j-(n_G^u)_j| \\
      % &\leq \sum_{j\in\bZ:|j-p_1u|>8\sqrt{u\log u}}|j-p_1u|^3\cdot(|(h_G^u)_j-b^u_j|+|b^u_j-(n_G^u)_j|) \\
                  &\leq \sum_{a=8\sqrt{u\log u}}^\infty(a+1)^3\sum_{j\in\bZ:|j-p_1u|>a}(|(h_G^u)_j-b^u_j|+|b^u_j-(n_G^u)_j|) \\
                  &\leq \sum_{a=8\sqrt{u\log u}}^\infty(a+1)^3\left(4000 \cdot \lambda \cdot e^{-a^2/8u} + c_3 \cdot \frac{2\lambda}{1-\lambda} \cdot e^{-a^2/8u}\right) \\
                  &\leq \left(4000+\frac{2c_3}{1-\lambda}\right) \cdot \lambda \cdot \int_{a=8\sqrt{u\log u}-1}^\infty (2a)^3 e^{-a^2/8u} da \\
                  % &= \left(4000+\frac{2c_3}{1-\lambda}\right) \cdot \lambda \cdot 2^9u^2\int_{q=\sqrt{8\log u}-1/\sqrt{8u}}^\infty q^3 e^{-q^2} dq \\
                  &\leq \left(4000+\frac{2c_3}{1-\lambda}\right) \cdot \lambda \cdot 2^8u^2\int_{q=6\log u}^\infty q e^{-q}dq \\
                  &= \left(4000+\frac{2c_3}{1-\lambda}\right) \cdot \lambda \cdot 2^8u^2 \cdot \frac{6\log u+1}{u^6} \\
                  &\leq \left(4000+\frac{2c_3}{1-\lambda}\right) \cdot \lambda \cdot 2^8
    \end{align*}
    where the third inequality above holds because $a^3e^{-a^2/8u}$ is maximized at $\sqrt{12 u}<8\sqrt{u\log u}-1$, the fourth inequality uses the fact that $8\sqrt{u\log u}-1\geq 7\sqrt{u\log u}$ and then applies the substitution $q=a^2/8u$, the equality follows by applying integration by parts, and the final inequality holds because $u\geq 2$ by assumption. Thus~(\ref{eq:thirdderbound}) becomes
    \begin{align*}
      \left|{\dv{^3}{\theta^3}}(\hat{h}_G^u-\hat{n}_G^u)(\theta)\right|
      &\leq 2^9 \cdot \lambda u \cdot (1+\log u)^{\eta_1 \log\log u+\eta_2+3/2} + 2^8\left(4000+\frac{2c_3}{1-\lambda}\right) \cdot \lambda \\
      &\leq C_2 \cdot \lambda u \cdot (1+\log u)^{\eta_1 \log\log u+\eta_2+3/2}.
    \end{align*}
    Combining the above inequality with~(\ref{eq:lowders}) and expanding the Taylor approximation at $\theta=0$ gives
    \begin{align*}
      % \label{eq:taylorbound}
      \begin{split}
        |\hat{h}_G^u(\theta)-\hat{n}_G^u(\theta)|
        &\leq \frac{C_1}{2} \cdot \lambda \cdot |\theta|^2 + \frac{C_2}{6} \cdot \lambda u \cdot (1+\log u)^{\eta_1 \log\log u+\eta_2+3/2} \cdot |\theta|^3 \\
        &\leq \lambda \cdot \left(\frac{C_1}{2} \cdot |\theta|^2 + \frac{C_2}{6} \cdot u \cdot |\theta|^3\right) \cdot (1+\log u)^{\eta_1 \log\log u+\eta_2+3/2}.
      \end{split}
    \end{align*}
  \end{proof}

  We apply Lemma~\ref{lem:taylorbound} below to bound the Fourier coefficients of $\bigast_{k\in[\ell]}h_G^{t_k}-\bigast_{k\in[\ell]}n_G^{t_k}$.
  
  \begin{lemma}
    \label{lem:fourierbound}
    \begin{align*}
      \left|\prod_{k\in[\ell]}\hat{h}_G^{t_k}(\theta)-\prod_{k\in[\ell]}\hat{n}_G^{t_k}(\theta)\right|
      &\leq \frac{\lambda}{\sqrt{t}} \cdot (1+\log t)^{\eta_1 \log\log t+\eta_2} \cdot \frac{(1+\log t)^{6-\eta_1/20}}{2^{(\eta_2+6-\eta_1)/2}} \\
      &\hspace{.5cm}\cdot \left(C_1 \cdot t \cdot |\theta|^2 + \frac{C_2}{6} \cdot t^{3/2} \cdot |\theta|^3\right) \cdot e^{-p_0p_1t\theta^2/60}.
    \end{align*}
  \end{lemma}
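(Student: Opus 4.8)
The plan is to prove the bound by telescoping the difference of the two products, controlling the single ``bad'' factor $\hat{h}_G^{t_k}(\theta)-\hat{n}_G^{t_k}(\theta)$ with the Taylor estimate of Lemma~\ref{lem:taylorbound} (through which the inductive hypothesis enters), and controlling every other factor by Gaussian decay of its modulus. Explicitly, I would begin with
\[
  \prod_{k\in[\ell]}\hat{h}_G^{t_k}(\theta)-\prod_{k\in[\ell]}\hat{n}_G^{t_k}(\theta)
  = \sum_{k\in[\ell]}\Bigl(\prod_{j<k}\hat{n}_G^{t_j}(\theta)\Bigr)\bigl(\hat{h}_G^{t_k}(\theta)-\hat{n}_G^{t_k}(\theta)\bigr)\Bigl(\prod_{j>k}\hat{h}_G^{t_j}(\theta)\Bigr),
\]
and bound the modulus of each summand. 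The centered Fourier transform has the same modulus as the ordinary one, so condition~\ref{it:dnfourier} of Definition~\ref{def:discnorm} gives $|\hat{n}_G^{t_j}(\theta)|\le e^{-p_0p_1t_j\theta^2/20}$ and Lemma~\ref{lem:smooth} gives $|\hat{h}_G^{t_j}(\theta)|\le e^{-p_0p_1t_j\theta^2/20}$; multiplying these over all $j\ne k$ and using $\sum_j t_j=t$ bounds the product of the outer factors by $e^{-p_0p_1(t-t_k)\theta^2/20}$. The middle factor is bounded by Lemma~\ref{lem:taylorbound} applied with $u=t_k$ (legitimate since each $t_k\le t-1$). Summing over $k\in[\ell]$ yields
\[
  \Bigl|\prod_{k\in[\ell]}\hat{h}_G^{t_k}(\theta)-\prod_{k\in[\ell]}\hat{n}_G^{t_k}(\theta)\Bigr|
  \le \lambda\sum_{k\in[\ell]}\Bigl(\tfrac{C_1}{2}|\theta|^2+\tfrac{C_2}{6}t_k|\theta|^3\Bigr)(1+\log t_k)^{\eta_1\log\log t_k+\eta_2+3/2}\,e^{-p_0p_1(t-t_k)\theta^2/20}.
\]

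From here it is a matter of simplifying using $\ell\le\sqrt t+1$, $t_k\le\min\{\sqrt t+1,\,t-1\}$, and $\sum_k t_k=t$. The exponent satisfies $t-t_k\ge t/3$ for every $t\ge 2$ (immediate for $t\ge 5$ from $t_k\le\sqrt t+1$, and checked by hand for $t\in\{2,3,4\}$), so $e^{-p_0p_1(t-t_k)\theta^2/20}\le e^{-p_0p_1t\theta^2/60}$ uniformly in $k$. The polynomial part collapses exactly: $\sum_k\tfrac{C_1}{2}|\theta|^2=\ell\tfrac{C_1}{2}|\theta|^2\le C_1\sqrt t|\theta|^2=\tfrac{1}{\sqrt t}\,C_1t|\theta|^2$ and $\sum_k\tfrac{C_2}{6}t_k|\theta|^3=\tfrac{C_2}{6}t|\theta|^3=\tfrac{1}{\sqrt t}\cdot\tfrac{C_2}{6}t^{3/2}|\theta|^3$, producing the factor $\tfrac1{\sqrt t}\bigl(C_1t|\theta|^2+\tfrac{C_2}{6}t^{3/2}|\theta|^3\bigr)$ of the target. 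Finally, since $(1+\log s)^{\eta_1\log\log s+\eta_2+3/2}$ is nondecreasing in $s$, I would bound the remaining log-factor by its value at $s=\min\{\sqrt t+1,\,t-1\}$; because this $s$ is comfortably smaller than $t$, one has $\log\log s\le\log\log t-\log 2$ (so the exponent drops by about $\eta_1\log 2$) and $1+\log s$ is at most a constant multiple of $1+\log t$, and I would verify that for the prescribed $\eta_1=140$ and $\eta_2$ this makes the log-factor at length $s$ at most $(1+\log t)^{\eta_1\log\log t+\eta_2}$ times the claimed correction $(1+\log t)^{6-\eta_1/20}/2^{(\eta_2+6-\eta_1)/2}$.

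I expect the only genuinely delicate point to be this last estimate: it must hold uniformly in $t$ rather than merely asymptotically, and the log-factor at length $s\approx\sqrt t$ involves $\eta_2$, which itself grows like $\log(1/p_0p_1)$ when the labeling is unbalanced, so one must check that the $\eta_1\log 2$-sized drop in the exponent together with the explicit value of $\eta_2$ really does dominate the extra $3/2$ in the exponent and the various constant and $\poly\log t$ losses incurred above — this is precisely what dictates the explicit constants $\eta_1$ and $\eta_2$ in Theorem~\ref{thm:berryesseen}. Everything else is a mechanical combination of the telescoping identity with the two Fourier-tail bounds and Lemma~\ref{lem:taylorbound}.
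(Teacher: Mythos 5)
Your proposal follows exactly the paper's route: the same telescoping identity, the same use of Lemma~\ref{lem:smooth} and condition~\ref{it:dnfourier} for the Gaussian decay of the spectator factors, $t-t_k\ge t/3$ to arrive at $e^{-p_0p_1t\theta^2/60}$, Lemma~\ref{lem:taylorbound} on the middle factor, and then $\ell\le 2\sqrt t$, $\sum_k t_k=t$, $t_k\le\min\{\sqrt t+1,t-1\}$ to collapse to the stated form. The bookkeeping of the log-factor is also done the same way (bound the base by $1+\log(\sqrt t+1)$ and the exponent via $\log\log$ of $\min\{\sqrt t+1,t-1\}$, then compare with a power of $(1+\log t)/\sqrt 2$).

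One concrete correction: the drop you quote, $\log\log s\le\log\log t-\log 2$ for $s=\min\{\sqrt t+1,t-1\}$, does \emph{not} hold uniformly in $t$; for example at $t=3$ one has $\log\log 2\approx-0.37$ while $\log\log 3-\log 2\approx -0.60$, so the inequality reverses. The paper deliberately uses the much weaker $\log\log\min\{\sqrt t+1,\,t-1\}\le\log\log t-1/20$, which \emph{does} hold for all $t\ge 2$, and that factor of $1/20$ (not $\log 2$) is what appears as $\eta_1/20$ in the claimed correction term $(1+\log t)^{6-\eta_1/20}/2^{(\eta_2+6-\eta_1)/2}$ and in the choice $\eta_1=140$. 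Also, your monotonicity claim for $s\mapsto(1+\log s)^{\eta_1\log\log s+\eta_2+3/2}$ is not strictly true on the continuum near $s=1$ (the derivative can be negative for $s$ slightly above $1$); the paper sidesteps this by bounding the base and the exponent separately, which is cleaner and suffices at integer arguments. Apart from substituting $1/20$ for $\log 2$ and that small change of bookkeeping, your plan matches the paper's proof.
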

  \begin{proof}
    By Lemma~\ref{lem:smooth},
    \begin{align*}
      |\hat{h}_G^u(\theta)| &\leq e^{-p_0p_1u\theta^2/20}.
    \end{align*}
    The same bound holds for $|\hat{n}_G^u(\theta)|$ by condition~\ref{it:dnfourier} of Definition~\ref{def:discnorm}. Thus
    \begin{align*}
      \hspace{1em}&\hspace{-1em}
                    \left|\prod_{k\in[\ell]}\hat{h}_G^{t_k}(\theta)-\prod_{k\in[\ell]}\hat{n}_G^{t_k}(\theta)\right|\\
                  &= \left|\sum_{k\in[\ell]}\left(\prod_{k'=0}^{k-1}\hat{n}_G^{t_{k'}}(\theta)\right)\cdot(\hat{h}_G^{t_k}(\theta)-\hat{n}_G^{t_k}(\theta))\cdot\left(\prod_{k'=k+1}^{\ell-1}\hat{h}_G^{t_{k'}}(\theta)\right)\right| \\
                  &\leq \sum_{k\in[\ell]}|\hat{h}_G^{t_k}(\theta)-\hat{n}_G^{t_k}(\theta)| \cdot e^{-p_0p_1(t-t_k)\theta^2/20} \\
                  &\leq \sum_{k\in[\ell]}|\hat{h}_G^{t_k}(\theta)-\hat{n}_G^{t_k}(\theta)| \cdot e^{-p_0p_1t\theta^2/60},
    \end{align*}
    % By definition, all Fourier coefficients of a probability distribution have absolute value $\leq 1$. For arbitrary $\alpha_1,\alpha_2,\beta_1,\beta_2\in\bC$ of absolute value $\leq 1$, then
    % \begin{align*}
    %   |\alpha_1\beta_1-\alpha_2\beta_2|
    %   &\leq |\alpha_1-\alpha_2| \cdot |\beta_1| + |\alpha_2| \cdot |\beta_1-\beta_2| \\
    %   &\leq |\alpha_1-\alpha_2| + |\beta_1-\beta_2|.
    % \end{align*}
    where the final inequality above holds because $t-t_k\geq t/3$ by definition. Applying Lemma~\ref{lem:taylorbound} with $u=t_k$ for $k\in[\ell]$ to bound the sum above gives
    \begin{align*}
      \hspace{1em}&\hspace{-1em} \sum_{k\in[\ell]}|\hat{h}_G^{t_k}(\theta)-\hat{n}_G^{t_k}(\theta)| \\
                  &\leq \lambda\cdot\left(\frac{C_1}{2} \cdot \ell \cdot |\theta|^2 + \frac{C_2}{6} \cdot t \cdot |\theta|^3\right) \cdot \left(1+\log(\sqrt{t}+1)\right)^{\eta_1 \log\log\min\{\sqrt{t}+1,t-1\}+\eta_2+3/2} \\
                  &\leq \lambda \cdot \left(C_1 \cdot \sqrt{t} \cdot |\theta|^2 + \frac{C_2}{6} \cdot t \cdot |\theta|^3\right) \cdot \left(\frac{1+\log t}{\sqrt{2}}\right)^{\eta_1 (\log\log t-1/20)+\eta_2+6} \\
                  &\leq \lambda \cdot \left(C_1 \cdot \sqrt{t} \cdot |\theta|^2 + \frac{C_2}{6} \cdot t \cdot |\theta|^3\right) \cdot \frac{(1+\log t)^{6-\eta_1/20}}{2^{(\eta_2+6-\eta_1)/2}} \cdot (1+\log t)^{\eta_1 \log\log t+\eta_2}.
    \end{align*}
    where the first inequality above holds because all $k\in[\ell]$ have $t_k\leq\min\{\sqrt{t}+1,t-1\}$ by definition, and the second inequality above holds because $\ell\leq\sqrt{t}+1\leq 2\sqrt{t}$, and $t\geq 2$ so that $1+\log(\sqrt{t}+1)\leq((1+\log t)/\sqrt{2})^4$ and $\log\log\min\{\sqrt{t}+1,t-1\} \leq \log\log t-1/20$.

    The desired result now follows by combining the two inequalities above.
  \end{proof}

  The following lemma squares and integrates the Fourier coefficient bound in Lemma~\ref{lem:fourierbound} to bound the $\ell_2$-norm of $\bigast_{k\in[\ell]}h_G^{t_k}-\bigast_{k\in[\ell]}n_G^{t_k}$.
  
  \begin{lemma}
    \label{lem:convl2}
    \begin{align*}
      \left\|\bigast_{k\in[\ell]}h_G^{t_k}-\bigast_{k\in[\ell]}n_G^{t_k}\right\|
      &\leq t^{-1/4} \cdot \frac{\lambda}{\sqrt{t}} \cdot (1+\log t)^{\eta_1 \log\log t+\eta_2} \cdot \frac{(1+\log t)^{6-\eta_1/20}}{2^{(\eta_2+6-\eta_1)/2}} \cdot \frac{70 \cdot (C_1+C_2)}{(p_0p_1)^{7/2}}.
    \end{align*}
  \end{lemma}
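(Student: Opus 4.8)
The plan is to move to the Fourier side via Parseval's identity, substitute the pointwise bound from Lemma~\ref{lem:fourierbound}, and evaluate the resulting Gaussian integral.

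First, recall that for any $f\in\ell_1(\bZ)\cap\ell_2(\bZ)$, Parseval's identity for the centered Fourier transform gives $\|f\|^2=\frac{1}{2\pi}\int_{-\pi}^{\pi}|\hat f(\theta)|^2\,d\theta$, since $|\hat f|$ equals the modulus of the ordinary Fourier transform and $\int_{-\pi}^\pi e^{-i\theta m}\,d\theta$ vanishes unless $m=0$. Applying this with $f=\bigast_{k\in[\ell]}h_G^{t_k}-\bigast_{k\in[\ell]}n_G^{t_k}$, whose centered Fourier transform is $\prod_{k\in[\ell]}\hat h_G^{t_k}(\theta)-\prod_{k\in[\ell]}\hat n_G^{t_k}(\theta)$ because the centered transform turns convolution into multiplication, reduces the lemma to bounding $\frac{1}{2\pi}\int_{-\pi}^\pi\bigl|\prod_{k}\hat h_G^{t_k}(\theta)-\prod_{k}\hat n_G^{t_k}(\theta)\bigr|^2\,d\theta$.

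Next I would plug in Lemma~\ref{lem:fourierbound}. Writing $A=\frac{\lambda}{\sqrt t}(1+\log t)^{\eta_1\log\log t+\eta_2}\cdot\frac{(1+\log t)^{6-\eta_1/20}}{2^{(\eta_2+6-\eta_1)/2}}$, that lemma bounds the integrand modulus by $A\bigl(C_1 t\theta^2+\tfrac{C_2}{6}t^{3/2}|\theta|^3\bigr)e^{-p_0p_1t\theta^2/60}$. Squaring, expanding $\bigl(C_1t\theta^2+\tfrac{C_2}{6}t^{3/2}|\theta|^3\bigr)^2=C_1^2t^2\theta^4+\tfrac{C_1C_2}{3}t^{5/2}|\theta|^5+\tfrac{C_2^2}{36}t^3\theta^6$, and extending the domain of integration from $[-\pi,\pi]$ to all of $\bR$, the task becomes to evaluate $\int_{\bR}|\theta|^{2m}e^{-\alpha\theta^2}\,d\theta=\Gamma\bigl(m+\tfrac12\bigr)\,\alpha^{-m-1/2}$ with $\alpha=p_0p_1t/30$ for $m\in\{2,\tfrac52,3\}$. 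The key structural point is that $\alpha^{-m-1/2}$ contributes $\Theta(t^{-m-1/2})$ for fixed $p$, which exactly cancels the $t^{m}$ prefactor (the three prefactors being $t^2,t^{5/2},t^3$), so all three terms scale as $t^{-1/2}$; this cancellation is precisely what produces the $t^{-1/4}$ after the square root at the end. The worst power of $(p_0p_1)^{-1}$ is $(p_0p_1)^{-7/2}$, coming from the $t^3\theta^6$ term, and since $p_0+p_1=1$ forces $0<p_0p_1\le 1/4$ one can absorb the smaller powers by writing $(p_0p_1)^{-r}\le(1/4)^{7-r}(p_0p_1)^{-7}$ for $r\in\{\tfrac52,3,\tfrac72\}$.

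Assembling these pieces gives $\|f\|^2\le A^2(C_1+C_2)^2\cdot\frac{K}{(p_0p_1)^7\sqrt t}$ for an absolute constant $K$ obtained by collecting $\tfrac1{2\pi}$, the values $\Gamma(\tfrac52),\Gamma(3),\Gamma(\tfrac72)$, the powers of $30$, the factor $\tfrac16$ on $C_2$, the coefficients $\tfrac13,\tfrac1{36}$ from the cross terms, and the powers $(1/4)^{7-r}$; a direct numerical check shows $K$ is comfortably below $4900$, so taking square roots (using $C_1^2,C_1C_2,C_2^2\le(C_1+C_2)^2$) yields the claimed bound with prefactor at most $70$. The only real obstacle is this bookkeeping: there is no conceptual difficulty, but one must track the three cross terms separately---since $C_1$ and $C_2$ enter the squared Fourier bound attached to different powers of $t$ and $|\theta|$---before combining them via $(C_1+C_2)^2$ at the very end.
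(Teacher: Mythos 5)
Your proposal is correct and follows essentially the same route as the paper: apply Parseval's identity for the centered Fourier transform (which turns the convolution $\bigast_k$ into the product $\prod_k$), substitute the pointwise bound of Lemma~\ref{lem:fourierbound}, reduce to Gaussian moment integrals via the substitution $q=\sqrt{p_0p_1t/30}\,\theta$, and observe that the resulting $t$-powers all collapse to $t^{-1/2}$ inside the square root. The only minor difference is a bookkeeping choice: you expand $\bigl(C_1t\theta^2+\tfrac{C_2}{6}t^{3/2}|\theta|^3\bigr)^2$ fully and carry the cross term $\tfrac{C_1C_2}{3}t^{5/2}|\theta|^5$, whereas the paper uses $(a+b)^2\le 2a^2+2b^2$ to drop it; both land comfortably under the stated constant $70$.
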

  \begin{proof}
    Squaring and integrating the bound in Lemma~\ref{lem:fourierbound} gives
    \begin{align*}
      \left\|\bigast_{k\in[\ell]}h_G^{t_k}-\bigast_{k\in[\ell]}n_G^{t_k}\right\|
      &= \sqrt{\int_{\theta=-\pi}^\pi \left|\prod_{k\in[\ell]}\hat{h}_G^{t_k}(\theta)-\prod_{k\in[\ell]}\hat{n}_G^{t_k}(\theta)\right|^2 \frac{d\theta}{2\pi}} \\
      &\leq \frac{\lambda}{\sqrt{t}} \cdot (1+\log t)^{\eta_1 \log\log t+\eta_2} \cdot \frac{(1+\log t)^{6-\eta_1/20}}{2^{(\eta_2+6-\eta_1)/2}} \\
      &\hspace{.5cm}\cdot \sqrt{2C_1^2\int_{\theta=-\pi}^\pi t^2\theta^4 e^{-p_0p_1t\theta^2/30} \frac{d\theta}{2\pi} + \frac{C_2^2}{18}\int_{\theta=-\pi}^\pi t^3\theta^6 e^{-p_0p_1t\theta^2/30} \frac{d\theta}{2\pi}}.
    \end{align*}
    Substituting $q=\sqrt{p_0p_1t/30}\cdot\theta$ in the integrals in the right  hand side above gives
    \begin{align*}
      \hspace{1em}&\hspace{-1em} \sqrt{2C_1^2\int_{\theta=-\pi}^\pi t^2\theta^4 e^{-p_0p_1t\theta^2/30} \frac{d\theta}{2\pi} + \frac{C_2^2}{18}\int_{\theta=-\pi}^\pi t^3\theta^6 e^{-p_0p_1t\theta^2/30} \frac{d\theta}{2\pi}} \\
                  &\leq \sqrt{2C_1^2\left(\frac{30}{p_0p_1}\right)^{5/2} \cdot t^{-1/2} \cdot \int_{q=-\infty}^\infty q^4e^{-q^2} \frac{dq}{2\pi} + \frac{C_2^2}{18}\left(\frac{30}{p_0p_1}\right)^{7/2} \cdot t^{-1/2} \cdot \int_{q=-\infty}^\infty q^6e^{-q^2} \frac{dq}{2\pi}} \\
                  &\leq t^{-1/4} \cdot \frac{70 \cdot (C_1+C_2)}{(p_0p_1)^{7/2}}.
    \end{align*}
    The two inequalities above imply the desired bound.
  \end{proof}

  To complete the proof, we apply the Cauchy-Schwartz inequality to the bound in Lemma~\ref{lem:convl2} to obtain an $\ell_1$-bound, which combines with Lemma~\ref{lem:startind} to imply the theorem statement. Specifically, by Lemma~\ref{lem:convl2} and Cauchy-Schwartz,
  \begin{align*}
    &\left\|\left(\bigast_{k\in[\ell]}h_G^{t_k}-\bigast_{k\in[\ell]}n_G^{t_k}\right)_S\right\|_1 \\
    &\leq \sqrt{16\sqrt{t\log t}+1} \cdot \left\|\bigast_{k\in[\ell]}h_G^{t_k}-\bigast_{k\in[\ell]}n_G^{t_k}\right\| \\
    &\leq \frac{\lambda}{\sqrt{t}} \cdot (1+\log t)^{\eta_1 \log\log t+\eta_2} \cdot \frac{(1+\log t)^{7-\eta_1/20}}{2^{(\eta_2+6-\eta_1)/2}} \cdot \frac{300 \cdot (C_1+C_2)}{(p_0p_1)^{7/2}}.
  \end{align*}
  Applying this inequality with Lemma~\ref{lem:startind} gives
  \begin{align*}
    \hspace{1em}&\hspace{-1em} \|h_G^t-n_G^t\|_1 \\
                &\leq \frac{\lambda}{\sqrt{t}} \cdot \left(8000 + \frac{2(c_2+c_3)}{1-\lambda} + (1+\log t)^{\eta_1 \log\log t+\eta_2} \cdot \frac{(1+\log t)^{7-\eta_1/20}}{2^{(\eta_2+6-\eta_1)/2}} \cdot \frac{300 \cdot (C_1+C_2)}{(p_0p_1)^{7/2}}\right) \\
    &\leq \frac{\lambda}{\sqrt{t}}(1+\log t)^{\eta_1 \log\log t+\eta_2} \cdot \left(\frac{8000 + 2(c_2+c_3)/(1-\lambda)}{(1+\log 2)^{\eta_1\log\log t+\eta_2}} + \frac{(1+\log t)^{7-\eta_1/20}}{2^{(\eta_2+6-\eta_1)/2}} \cdot \frac{300 \cdot (C_1+C_2)}{(p_0p_1)^{7/2}}\right).
  \end{align*}
  Thus $\|h_G^t-n_G^t\|_1 \leq (\lambda/\sqrt{t})(1+\log t)^{\eta_1 \log\log t+\eta_2}$ when
  \begin{align*}
    \eta_1 &= 140 \\
    \eta_2 &= 140 + \frac{2}{\log 2} \cdot \log\max\left\{8000+3(c_2+c_3),\; \frac{300(C_1+C_2)}{(p_0p_1)^{7/2}}\right\} \\
           &\leq 140 + 3 \cdot \log\left(\frac{2^{28}+2^{10}c_1+2^{18}c_3}{(p_0p_1)^{7/2}} + 3c_2\right),
  \end{align*}
  where we have applied the fact that $C_1+C_2 \leq 2^{20} + 3c_1 + 2^{10}c_3$ by definition. Thus we have shown that~(\ref{eq:beinduct}) holds, completing the inductive step of the proof.
\end{proof}

\section{Acknowledgments}
The author thanks Salil Vadhan for numerous helpful discussions and suggestions, and thanks Venkat Guruswami for helping to motivate this work and providing feedback.

This work was primarily done while the author was at Harvard University. The author is currently supported by a National Science Foundation Graduate Research Fellowship under Grant No.~DGE 2146752.

\bibliography{library}
\bibliographystyle{alpha}

\appendix
\renewcommand{\thesection}{\Alph{section}}

\section{Omitted proofs}
\label{app:beproofs}
This section presents the proofs we omitted in Section~\ref{sec:prelim}. These proofs are standard or follow directly from prior work, but we include them here for completeness.

\begin{proof}[Proof of Lemma~\ref{lem:smooth}]
  This result was implicitly shown by Golowich and Vadhan~\cite{golowich_pseudorandomness_2022-1} in the proof of Theorem~\ref{thm:difftail}. Specifically, define $F\in\bR^{V\times V}$ by $F=J+(I-J)/10$, and define $P_\theta^{(0)}\in\bC^{V\times V}$ to be the diagonal matrix with $(P_\theta^{(0)})_{v,v}=e^{-i\theta(\val(v)-p_1)}$. Then the proof of Lemma~26 in \cite{golowich_pseudorandomness_2022} (the full version of \cite{golowich_pseudorandomness_2022-1}) shows that
  \begin{align*}
    |\bE[e^{-i\theta\Sigma\val(\RW_{G}^t)}]|
    &= |\vec{1}^\top(GP_\theta^{(0)})^t\vec{1}|.
  \end{align*}
  Now
  \begin{align*}
    |\vec{1}^\top(GP_\theta^{(0)})^t\vec{1}|
    &= |\vec{1}^\top(F^{-1}GF^{-1}\cdot FP_\theta^{(0)}F)^t\vec{1}| \\
    &\leq (\|F^{-1}GF^{-1}\|\cdot\|FP_\theta^{(0)}F\|)^t.
  \end{align*}
  Because $\lambda(G)\leq 1/100$ and $F^{-1}=J+10(I-J)$, it follows that $\|F^{-1}GF^{-1}\|\leq 1$, so
  \begin{align*}
    |\bE[e^{-i\theta\Sigma\val(\RW_{G}^t)}]|
    &\leq \|FP_\theta^{(0)}F\|^t.
  \end{align*}
  Lemma~28 in \cite{golowich_pseudorandomness_2022} implies that $\|FP_\theta^{(0)}F\|\leq e^{-p_0p_1\theta^2/20}$, so we obtain the desired inequality
  \begin{align*}
    |\bE[e^{-i\theta\Sigma\val(\RW_{G}^t)}]|
    &\leq e^{-p_0p_1t\theta^2/20}.
  \end{align*}
\end{proof}

\begin{proof}[Proof of Corollary~\ref{cor:expasympvar}]
  By Lemma~\ref{lem:asympvardef},
  \begin{align*}
    |\sigma^2(\Sigma\val(\RW_G^t)) - p_0p_1|
    &\leq 2\sum_{i=1}^\infty \frac{1}{n}|(\val-p_1)^\top G^i(\val-p_1)| \\
    &\leq 2\sum_{i=1}^\infty \lambda^i \cdot \frac{\|\val-p_1\|^2}{n} \\
    &= \frac{2}{1-\lambda} \cdot \lambda \cdot p_0p_1,
  \end{align*}
  where the first inequality above holds because $(\val-p_1)^\top G^i\vec{1}=(\val-p_1)^\top\vec{1}=0$ so that $(\val-p_1)^\top G^i\val=(\val-p_1)^\top G^i(\val-p_1)$, and the second inequality holds becuase $\lambda(G)=\lambda$ and $\val-p_1\in\vec{1}^\perp$.
\end{proof}

In the (standard) proof below of Lemma~\ref{lem:asympvarconv}, by bounding the rate of convergence to the expression in Lemma~\ref{lem:asympvardef}, we also implicitly prove Lemma~\ref{lem:asympvardef}.

\begin{proof}[Proof of Lemma~\ref{lem:asympvarconv}]
  By definition
  \begin{align*}
    \Var(\Sigma\val(\RW_G^t))
    &= \bE[(\Sigma\val(\RW_G^t))^2] - \bE[\Sigma\val(\RW_G^t)]^2 \\
    &= \sum_{i\in[t]}\sum_{i'\in[t]}\bE[\val(\RW_G^t)_i\cdot\val(\RW_G^t)_{i'}] - (p_1t)^2.
  \end{align*}
  The sum on the right hand side above can simplified as follows. For all $i=i'\in[t]$, then $\bE[\val(\RW_G^t)_i\cdot\val(\RW_G^t)_{i'}]=p_1$. Otherwise, if $i<i'$, then $\bE[\val(\RW_G^t)_i\cdot\val(\RW_G^t)_{i'}]=\bE[\val(\RW_G^t)_0\cdot\val(\RW_G^t)_{i'-i}]$, with an analogous equality if $i>i'$. Thus
  \begin{align*}
    \Var(\Sigma\val(\RW_G^t))
    &= p_1t + 2\sum_{\ell=1}^{t-1}(t-\ell)\bE[\val(\RW_G^t)_0\cdot\val(\RW_G^t)_\ell] - (p_1t)^2 \\
    &= (p_1-p_1^2)t + 2\sum_{\ell=1}^{t-1}(t-\ell)\bE[\val(\RW_G^t)_0\cdot\val(\RW_G^t)_\ell-p_1^2] \\
    &= p_0p_1t + 2\sum_{\ell=1}^{t-1}(t-\ell)\bE[\val(\RW_G^t)_0\cdot(\val(\RW_G^t)_\ell-p_1)].
  \end{align*}
  For $v\in V$, conditioned on the event that $(\RW_G^t)_0=v$, then the distribution of $(\RW_G^t)_\ell$ is given by $G^\ell\1_v$. Thus
  \begin{equation*}
    \bE[\val(\RW_G^t)_\ell-p_1\mid(\RW_G^t)_0=v] = (\val-p_1)^\top G^\ell\1_v,
  \end{equation*}
  so
  \begin{align*}
    \Var(\Sigma\val(\RW_G^t))
    &= p_0p_1t + 2\sum_{\ell=1}^{t-1}(t-\ell)\sum_{v\in V}\frac{1}{n}\val(v)\cdot (\val-p_1)^\top G^\ell\1_v \\
    &= p_0p_1t + 2\sum_{\ell=1}^{t-1}(t-\ell)\frac{1}{n}(\val-p_1)^\top G^\ell\val.
  \end{align*}

  The difference between $1/t$ times the above expression and the asymptotic variance as given in Lemma~\ref{lem:asympvardef} is
  \begin{align*}
    \hspace{-1em}&\hspace{-1em}\sigma^2(\Sigma\val(\RW_G^t)) - \frac{1}{t}\Var(\Sigma\val(\RW_G^t)) \\
                 &= 2\sum_{i=1}^{t-1}\frac{i}{t}\cdot\frac{1}{n}(\val-p_1)^\top G^i\val + 2\sum_{i=t}^\infty\frac{1}{n}(\val-p_1)^\top G^i\val.
  \end{align*}
  Because $G$ is a $\lambda$-spectral expander and $\val-p_1\in\bR^V$ is orthogonal to $\vec{1}$,
  \begin{align*}
    \left|\frac{1}{n}(\val-p_1)^\top G^i\val\right|
    &= \left|\frac{1}{n}(\val-p_1)^\top G^i(\val-p_1)\right| \\
    &= \left|\frac{1}{\sqrt{n}}(\val-p_1)^\top G^i\frac{1}{\sqrt{n}}(\val-p_1)\right| \\
    &\leq \left\|\frac{1}{\sqrt{n}}(\val-p_1)^\top\right\|\lambda^i\left\|\frac{1}{\sqrt{n}}(\val-p_1)\right\| \\
    &= \lambda^i \cdot p_0p_1.
  \end{align*}
  Thus
  \begin{align*}
    \left|\sigma^2(\Sigma\val(\RW_G^t)) - \frac{1}{t}\Var(\Sigma\val(\RW_G^t))\right|
    &\leq 2\sum_{i=1}^{t-1}\frac{i}{t}\cdot\lambda^i \cdot p_0p_1 + 2\sum_{i=t}^\infty\lambda^i \cdot p_0p_1 \\
    &\leq 2\sum_{i=1}^\infty\frac{i}{t}\cdot\lambda^i \cdot p_0p_1 \\
    &= \frac{2}{(1-\lambda)^2}\cdot\frac{\lambda}{t} \cdot p_0p_1.
  \end{align*}
\end{proof}

\begin{proof}[Proof of Proposition~\ref{prop:stickydn}]
  By Lemma~\ref{lem:stickyvar}, $\sigma^2=\sigma^2(\Sigma\val(\RW_{G_{\lambda,p}}^t))$, that is, the choice of $\lambda$ is such that $\Sigma\val(\RW_{G_{\lambda,p}}^t)$ has asymptotic variance $\sigma^2=p_0p_1(1+\lambda)/(1-\lambda)$. Also by definition, $\sigma^2-p_0p_1 = p_0p_1 \cdot 2\lambda/(1-\lambda)$, and $-\min\{p_0/p_1,p_1/p_0,1/100\}\leq\lambda\leq 1/100$. We apply these facts below to show that $\cN_{\sigma^2}^t = \Sigma\val(\RW_{G_{\lambda,p}}^t)$ satisfies each of the conditions in Definition~\ref{def:discnorm}:
  \begin{enumerate}
  \item By definition $\Sigma\val(\RW_{G_{\lambda,p}}^t)=p_1t$.
  \item By Lemma~\ref{lem:asympvarconv},
    \begin{align*}
      |\Var(\Sigma\val(\RW_{G_{\lambda,p}}^t))-\sigma^2t|
      &\leq \frac{2}{(1-|\lambda|)^2} \cdot |\lambda| \cdot p_0p_1 \\
      &= \frac{2}{(1-|\lambda|)^2} \cdot \frac{|\sigma^2-p_0p_1|\cdot(1-\lambda)}{2} \\
      &\leq \frac{101/100}{(99/100)^2} \cdot |\sigma^2-p_0p_1|.
    \end{align*}
  \item By definition $\Sigma\val(\RW_{G_{\lambda,p}}^1) = \text{Bern}(p)$.
  \item By definition, the distribution of $\sum_{i\in[\ell]}\Sigma\val(\RW_{G{\lambda,p}}^{t_k})$ is equal to the distribution of $\Sigma\val(\RW_G^t)$ for a sequence $G=(G_i)_{1\leq i\leq t-1}$ of expander graphs, $\ell-1$ of which are $J$ and $t-\ell$ of which are $G_{\lambda,p}$. Thus because $\|G_{\lambda,p}-J\|=\lambda(G_{\lambda,p})=|\lambda|$, Theorem~\ref{thm:difftail} implies that
    \begin{align*}
      2 \cdot \disTV\left(\sum_{i\in[\ell]}\Sigma\val(\RW_{G{\lambda,p}}^{t_k}),\; \Sigma\val(\RW_{G_{\lambda,p}}^t)\right)
      &\leq 4000 \cdot |\lambda| \cdot \frac{\ell-1}{t} \\
      &\leq 2000(1-\lambda) \cdot |\sigma^2/p_0p_1-1| \cdot \frac{\ell-1}{t} \\
      &\leq 2020 \cdot |\sigma^2/p_0p_1-1| \cdot \frac{\ell-1}{t}.
    \end{align*}
  \item By Theorem~\ref{thm:difftail},
    \begin{align*}
      \hspace{1em}&\hspace{-1em} \sum_{j\in\bZ:|j-p_1t|\geq a}|\Pr[\Sigma\val(\RW_{G_{\lambda,p}}^t)=(t-j,j)]-\Pr[\text{Bin}(t,p)=j]| \\
      &\leq 4000 \cdot |\lambda| \cdot e^{-a^2/8t} \\
      &\leq 2020 \cdot |\sigma^2/p_0p_1-1| \cdot e^{-a^2/8t}.
    \end{align*}
  \item This fact follows directly from Lemma~\ref{lem:smooth}.
    % By the proof of Lemma~\ref{lem:computefouriermoment} and Lemma~\ref{lem:FPFbound},
    % This fact was implicitly shown by Golowich and Vadhan~\cite{golowich_pseudorandomness_2022-1} in the proof of Theorem~\ref{thm:difftail}. Specifically, by the proof of Lemma~26 and Lemma~28 in the full version of their paper \cite{golowich_pseudorandomness_2022}, in their notation it holds that
    % \begin{align*}
    %   |\bE[e^{-i\theta\Sigma\val(\RW_{G_{\lambda,p}}^t)}]|
    %   &\leq \|FP_\theta^{(0)}F\|^t \leq e^{-p_0p_1t\theta^2/20}.
    % \end{align*}
  \end{enumerate}
\end{proof}

\end{document}